\def\namedlabel#1#2{\begingroup
    #2%
    \def\@currentlabel{#2}%
    \phantomsection\label{#1}\endgroup
}
\def\titlerunning#1{\gdef\titrun{#1}}
\def\author#1{\gdef\autrun{\def\and{\unskip, }#1}\gdef\@author{#1}}
\def\subjclass#1{{\renewcommand{\thefootnote}{}%
\footnote{\emph{Mathematics Subject Classification (2010):} #1}}}
\def\keywords#1{\par\medskip
\noindent\textbf{Keywords.} #1}
\newtheorem{theorem}{Theorem}[section]
\newtheorem*{thm*}{Theorem}
\newtheorem{corollary}[theorem]{Corollary}
\newtheorem{lemma}[theorem]{Lemma}
\newtheorem{proposition}[theorem]{Proposition}
\theoremstyle{definition}
\newtheorem{definition}[theorem]{Definition}
\newtheorem{remark}[theorem]{Remark}
\newtheorem{example}[theorem]{Example}
\DeclareMathOperator{\law}{Law}
\DeclareMathOperator{\cov}{cov}
\DeclareMathOperator{\Var}{Var}
\numberwithin{equation}{section}
\newcommand{\eps}{\varepsilon}
\newcommand{\R}{\mathbb{R}}
\newcommand{\N}{\mathbb{N}}
\newcommand{\p}{\mathbb{P}}
\newcommand{\Cf}{\mathrm{C}}
\newcommand{\B}{\mathcal{B}}
\newcommand{\D}{\mathrm{D}}
\newcommand{\F}{\mathcal{F}}
\newcommand{\E}{\mathbb{E}}
\newcommand{\m}{\vartheta}
\newcommand{\cP}{\mathcal{P}}
\newcommand{\cL}{\mathcal{L}}
\newcommand{\cG}{\mathcal{G}}
\newcommand{\cD}{\mathcal{D}}
\newcommand{\cS}{\mathcal{S}}
\newcommand{\cA}{\mathcal{A}}
\newcommand{\cE}{\mathcal{E}}
\newcommand{\cW}{\mathcal{W}}
\newcommand{\id}{\mathrm{id}}
\newcommand{\ext}{\mathrm{ext}}
\renewcommand{\L}[3]{L_2((#1,#3);#2)}
\title{Stochastic Modified Flows, Mean-Field Limits and Dynamics of Stochastic Gradient Descent}
\titlerunning{Stochastic Modified Flows and SGD}
\author{Benjamin Gess\footnote{Fakult\"{a}t f\"{u}r Mathematik, Universit\"{a}t Bielefeld, 33615 Bielefeld, Germany.}\, \footnote{Max Planck Institute for Mathematics in the Sciences, 04103 Leipzig, Germany}\, , Sebastian Kassing$^*$, Vitalii Konarovskyi$^*$\footnote{Institute of Mathematics of NAS of Ukraine, 01024 Kyiv, Ukraine}}
\date{\today}
\def\subjclass#1{{\renewcommand{\thefootnote}{}%
\footnote{\emph{Mathematics Subject Classification (2020):} #1}}}
\def\emails#1{{\renewcommand{\thefootnote}{}%
\footnote{\emph{E-mails:} #1}}}
\begin{document}

\maketitle

\emails{\href{mailto:benjamin.gess@math.uni-bielefeld.de}{benjamin.gess@math.uni-bielefeld.de}, \href{mailto:skassing@math.uni-bielefeld.de}{skassing@math.uni-bielefeld.de},\\ \href{mailto:vitalii.konarovskyi@math.uni-bielefeld.de}{vitalii.konarovskyi@math.uni-bielefeld.de}}

\begin{abstract}
We propose new limiting dynamics for stochastic gradient descent in the small learning rate regime called stochastic modified flows. These SDEs are driven by a cylindrical Brownian motion and improve the so-called stochastic modified equations by having regular diffusion coefficients and by matching the multi-point statistics. As a second contribution, we introduce distribution dependent stochastic modified flows which we prove to describe the fluctuating limiting dynamics of stochastic gradient descent in the small learning rate - infinite width scaling regime. 
  \keywords{Stochastic gradient descent, machine learning, overparametrization, stochastic modified equation, fluctuation mean field limit.}
\end{abstract}
\subjclass{Primary 
  60J05, 
  60H15, 
  68T07; 
Secondary  
60G46, 
60G57, 
46G05 
}

\section{Introduction}

Stochastic gradient descent algorithms (SGD), going back to~\cite{[RM51]}, are the most common way to train neural networks. Due to the non-convexity and non-smoothness of the corresponding loss landscapes, the analysis of the optimization dynamics is highly challenging. The analysis of the implicit, algorithmic bias of SGD in overparameterized networks is one of the key open problems in the understanding of the empirically observed good generalization properties of networks trained by SGD. Since the dynamics of SGD depend on many choices, like the choice of the loss function, the architecture of the network and the training data, their systematic understanding relies on the identification of universal structures that are invariant to these many degrees of freedoms, while retaining the essential properties of SGD. In recent years, several of such scaling limits and corresponding limiting dynamics have been identified. Among these, solutions to SDEs have been obtained as universal continuum objects in the small learning rate regime~\cite{E_Ma:2020,Li_Tai:2019}, while (stochastic) Wasserstein gradient flows have been found in infinite width overparameterized limits~\cite{Ch.Ba2018,Chizat_Bach:2020,Nitanda_Suzuki:2017,Mei_Montanari:2018,Rotskoff:2022,Gess_SMFE:2022,Sirignano_LLN_2020,Sirignano:2020,Javanmard_Mondelli:2020,Nguyen:2019,Rotskoff_Jelassi:2019}. In the present work, we introduce a new form of stochastic limiting dynamics which solves simultaneously three challenges met in previous works: (1) The irregularity of diffusion coefficients, (2) matching multi-point statistics, and (3) incorporating overparameterized limits. 

Before we comment on each of these aspects in a few more details, let us recall the principle setup of SGD in supervised learning. For a given training data set $\Theta \subseteq \R^{n_0}$ sampled from a probability distribution $\m$, one aims to minimize the empirical risk 
\[
R(z):=\E_{\m}\tilde R(z,\theta),\quad z \in \R^d,
\]
where $\tilde R:\R^d\times\Theta\to\R$ is a loss function. Let $\theta_n$, $n\in\N_0(:=\N\cup\{0\})$, be i.i.d. samples of training data drawn from $\m$. Then, the SGD dynamics is given by
\begin{equation}\label{equ_SGD_non_measure_depended_case}
Z_{n+1}^\eta(x)=Z_n^\eta(x)-\eta \nabla \tilde R(Z_n^\eta(x),\theta_n),\quad n \in \N_0,
\end{equation}
where $Z_0(x)=x$, $x\in\R^d$ and $\eta >0$. In particular, $Z^\eta_n$, $n\in \N_0$, allows to analyze the training dynamics of different initializations $x$ subject to the same choice of training data.

We next address the above mentioned challenges in a few more details.

\textbf{(1) The irregularity of diffusion coefficients:} In the regime of small learning rate, the foundational works of Li, Tai and E~\cite{Li:2017, Li_Tai:2019} have suggested stochastic modified equations (SME) as universal continuum limits that capture both the average gradient descent performed by SGD and its fluctuations. More precisely, it is shown that the SGD dynamics $Z^{\eta}_n$, $n \in \N_0$, with learning rate $\eta$ can be approximated to higher order in $\eta$ by solutions to SMEs
\begin{align} \begin{split}  \label{eq:intro_SDE} 
dY_{t}^{\eta}(x)&=-\nabla \left(R(Y_{t}^{\eta}(x))+\frac{\eta}{4}|\nabla R(Y_{t}^{\eta}(x))|^{2}\right)dt+\sqrt{\eta}\Sigma(Y_{t}^{\eta}(x))^{1/2}dW_{t}
\end{split}
\end{align}
where $Y_0^\eta(x)=x$ for $x\in\R^d$, $W_t$, $t\geq 0$, is a Brownian motion in $\R^d $ and $\Sigma:\R^d \to \R^{d\times d}$ is the matrix defined by 
\begin{equation} \label{eq:Sigma}
  \Sigma(y)=\E_{\m}\left[ (\nabla_y \tilde R(y,\theta)-\nabla R(y))\otimes (\nabla_y \tilde R(y,\theta)-\nabla R(y)) \right], \quad y \in \R^d.
\end{equation}
Indeed, this incorporates a certain degree of universality of (\ref{eq:intro_SDE}), since the noise in SGD is represented in (\ref{eq:intro_SDE}) by Brownian noise. In machine learning, and, in particular, in overparameterized settings, the covariance matrix $\Sigma$ is typically degenerate. As a result, the square root $\Sigma^{1/2}$ appearing in (\ref{eq:intro_SDE}) has limited regularity properties\footnote{The simple example $\Sigma(y)=y^2$, $\Sigma^{1/2}(y)=|y|$ shows that not more than Lipschitz continuity can be expected from $\Sigma^{1/2}$ in general.}, which makes the analysis of (\ref{eq:intro_SDE}) challenging, and leads to assumptions on $\Sigma^{1/2}$ that are in general not known to hold. The first contribution of this work is to resolve this issue by introducing a new model for the stochastic limiting dynamics, which we name stochastic modified flow (SMF), 
\begin{equation}\label{eq:intro_SMF}
\begin{split}
dX_t^{\eta}(x)&=-\nabla\left(R(X_t^{\eta}(x))+\frac{\eta}{4}|\nabla R(X_t^{\eta}(x))|^2\right)dt+\sqrt{\eta}\int_\Theta G(X_t^{\eta}(x),\theta)W(d\theta,dt),\\
X_0^\eta(x)&=x,\quad x\in\R^d, 
\end{split}
\end{equation}
where $G(x,\theta)=\nabla \tilde R(x,\theta)-\nabla R(x)$ and $W$ is a cylindrical Wiener process on the space $\L{\Theta}{\R}{\m}$.
It is important to notice that (\ref{eq:intro_SMF}) satisfies the same martingale problem as (\ref{eq:intro_SDE}), while avoiding the appearance of $\Sigma^\frac{1}{2}$, thereby bypassing the resulting irregularity of the diffusion coefficients. In contrast, only regularity assumptions on the individual losses $\tilde R$ are needed. More precisely, we get the following result.
\begin{theorem}[see Theorem~\ref{the_main_result} and Corollary~\ref{cor_comparison_for_one_point_motion}]\label{the_one_point_motion}
Let $\tilde R(\cdot,\theta)$ be regular enough for $\m$-a.e. $\theta\in\Theta$ and let $T>0$. Then for every $f\in\Cf^4_b(\R^d)$, one has
\[
\sup_{x\in\R^d}\sup_{n:n\eta\leq T}\left|\E f(X_{n\eta}^\eta(x))-\E f(Z_{n}^\eta(x))\right|\lesssim \eta^2.
\]
\end{theorem}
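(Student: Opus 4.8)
The plan is to reduce the statement to a classical weak-error analysis between a one-step scheme and a diffusion, exploiting the fact, stressed in the text around~\eqref{eq:intro_SMF}, that the one-point motion of the SMF is a finite-dimensional diffusion with the same generator as the SME~\eqref{eq:intro_SDE}. Indeed, since $W$ is a cylindrical Wiener process on $\L{\Theta}{\R}{\m}$ and $\Sigma(y)=\E_\m[G(y,\theta)\otimes G(y,\theta)]$, the martingale part of $X^\eta(x)$ has quadratic covariation
\[
d\big\langle (X^\eta)^i,(X^\eta)^j\big\rangle_t=\eta\int_\Theta G_i(X_t^\eta,\theta)G_j(X_t^\eta,\theta)\,\m(d\theta)\,dt=\eta\,\Sigma_{ij}(X_t^\eta)\,dt ,
\]
so the one-point motion solves an $\R^d$-valued SDE with generator
\[
\cL^\eta g=b^\eta\cdot\nabla g+\frac{\eta}{2}\sum_{i,j}\Sigma_{ij}\,\partial_i\partial_j g,\qquad b^\eta:=-\nabla\Big(R+\tfrac\eta4|\nabla R|^2\Big),
\]
which is exactly the SME generator. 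It therefore suffices to compare the SGD chain $Z_n^\eta$ with this diffusion, using freely the well-posedness and moment bounds of Theorem~\ref{the_main_result}. Write $P_s^\eta f(x):=\E f(X_s^\eta(x))$ for the associated (time-homogeneous) semigroup.

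Next comes the telescoping step. Fix $N$ with $N\eta\le T$ and set $u_n:=P_{(N-n)\eta}^\eta f$, so $u_N=f$ and $u_0=P_{N\eta}^\eta f$. Using $P_{(N-n)\eta}^\eta=P_\eta^\eta P_{(N-n-1)\eta}^\eta$ and the tower property, I would write
\[
\E f(Z_N^\eta(x))-\E f(X_{N\eta}^\eta(x))=\sum_{n=0}^{N-1}\E\Big[\E\big(u_{n+1}(Z_{n+1}^\eta)\,\big|\,Z_n^\eta\big)-\big(P_\eta^\eta u_{n+1}\big)(Z_n^\eta)\Big].
\]
Each summand is a one-step (local) error evaluated at $y=Z_n^\eta$, namely
\[
\mathrm{LE}_n(y)=\E_\theta\, u_{n+1}\big(y-\eta\nabla\tilde R(y,\theta)\big)-\big(P_\eta^\eta u_{n+1}\big)(y).
\]
If $\sup_y|\mathrm{LE}_n(y)|\lesssim\eta^3$ uniformly in $n$ and in $\eta\le\eta_0$, then summing the $N\le T/\eta$ terms yields the claimed $O(\eta^2)$ bound, uniformly in the starting point $x$.

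The heart of the argument is the local error via dual expansions. For the SGD increment $\Delta=-\eta\nabla\tilde R(y,\theta)$ I would Taylor expand $u_{n+1}(y+\Delta)$ to fourth order and take $\E_\theta$, using $\E_\theta\Delta=-\eta\nabla R(y)$, $\E_\theta[\Delta\otimes\Delta]=\eta^2\big(\Sigma(y)+\nabla R(y)\otimes\nabla R(y)\big)$ and $\E_\theta|\Delta|^k=O(\eta^k)$, obtaining
\[
\E_\theta u_{n+1}(y+\Delta)=u_{n+1}-\eta\,\nabla R\cdot\nabla u_{n+1}+\frac{\eta^2}{2}\sum_{i,j}\big(\Sigma_{ij}+\partial_iR\,\partial_jR\big)\partial_i\partial_j u_{n+1}+O(\eta^3).
\]
For the diffusion increment I would use the Kolmogorov expansion $P_\eta^\eta u_{n+1}=u_{n+1}+\eta\cL^\eta u_{n+1}+\tfrac{\eta^2}{2}(\cL^\eta)^2u_{n+1}+O(\eta^3)$. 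The constant and $O(\eta)$ terms agree, and the decisive point is that the $O(\eta^2)$ terms coincide: the Hessian contribution $\tfrac{\eta^2}{2}\sum_{i,j}\partial_iR\,\partial_jR\,\partial_i\partial_j u$ is reproduced by $(\cL_0)^2$ with $\cL_0=-\nabla R\cdot\nabla$, while the first-order part of $(\cL_0)^2u$ is cancelled exactly by the correction drift through the identity $\tfrac12\nabla|\nabla R|^2\cdot\nabla u=\sum_{i,j}\partial_jR\,\partial_i\partial_jR\,\partial_i u$. This is precisely the algebraic reason the term $\tfrac\eta4|\nabla R|^2$ is built into~\eqref{eq:intro_SMF}. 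Writing $\cL^\eta=A_0+\eta A_1+\eta B$ with $A_0=-\nabla R\cdot\nabla$, $A_1=-\tfrac14\nabla|\nabla R|^2\cdot\nabla$ and $B=\tfrac12\sum_{i,j}\Sigma_{ij}\partial_i\partial_j$, this $\eta$-grading ensures that the second-order operator $B$ enters the $O(\eta^3)$ remainders only with compensating powers of $\eta$, so that both remainders are controlled by $\|u_{n+1}\|_{\Cf^4_b}$ together with $\E_\theta|\Delta|^4=O(\eta^4)$; hence $\mathrm{LE}_n=O(\eta^3)$.

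The main obstacle is the uniform regularity of the semigroup: I must bound $\|P_s^\eta f\|_{\Cf^4_b}$ uniformly in $s\in[0,T]$, in $\eta\le\eta_0$ and, crucially, in the spatial variable, the last being responsible for the outer $\sup_{x\in\R^d}$. This is where the hypothesis that $\tilde R(\cdot,\theta)$ be \emph{regular enough} for $\m$-a.e.\ $\theta$ is used: it should yield bounded derivatives of $\tilde R(\cdot,\theta)$ (hence of $b^\eta$ and $\Sigma$) up to the needed order and uniform-in-$y$ moment bounds on $\nabla\tilde R(y,\theta)$, from which the propagation of $\Cf^4_b$-estimates through the diffusion semigroup follows by differentiating the stochastic flow and a Gronwall argument. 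I expect this propagation, rather than the one-step matching, to be the delicate part, since $\Sigma$ may be degenerate and one cannot invoke parabolic smoothing; all derivative bounds must come from the regularity of the coefficients alone.
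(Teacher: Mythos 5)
Your proposal is correct in outline, and its core mechanism is the same one the paper uses: a telescoping semigroup comparison, a second-order matching of the chain's one-step generator against the diffusion generator in which the term $\tfrac{\eta}{4}|\nabla R|^{2}$ produces exactly the cancellation you compute (compare your identity $\tfrac12 A_0^2 u=\tfrac14\nabla|\nabla R|^2\cdot\nabla u+\tfrac12\nabla R\otimes\nabla R:\nabla^2u$ with the paper's computation of $\cL_2+\tfrac12\cL_1^2$), and a Kolmogorov-equation regularity input to control the remainders. The genuine difference is the ambient space. The paper never argues in $\R^d$: it proves the much more general Theorem~\ref{the_main_result} at the level of measure-valued processes, expanding the transition operator $\cS$ on functions over $\cP_2(\R^d)$ by means of Lions derivatives (Lemma~\ref{lem_property_of_derivative}) and using the measure-valued Kolmogorov equation of Proposition~\ref{pro_kolmogorov_equation}; the one-point statement is then the trivial specialization $\Phi(\mu)=\langle f,\mu\rangle$, $\mu=\delta_x$ in Corollary~\ref{cor_comparison_for_one_point_motion}. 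You instead specialize at the outset, observing that the one-point motion of \eqref{eq:intro_SMF} is a finite-dimensional diffusion whose generator coincides with that of the SME \eqref{eq:intro_SDE}, and then run a classical Talay--Tubaro weak-error analysis (backward semigroup $u_n=P^\eta_{(N-n)\eta}f$ evaluated along the chain, versus the paper's propagation of one-step errors through $\cS^{n-i-1}$ -- two standard variants of the same telescoping, both resting on the sup-norm contractivity of the chain's transition operator). What your route buys is self-containedness: no $\cP_2$ calculus, no SDE-with-interaction well-posedness, and a semigroup regularity step that reduces to differentiating a finite-dimensional stochastic flow with bounded-derivative coefficients (your remark that no ellipticity of $\Sigma$ is available, so all bounds must come from coefficient regularity, is exactly the right point and is also why the paper avoids $\Sigma^{1/2}$ altogether). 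What it gives up is everything else in the paper: the multi-point and mean-field statements (Theorems~\ref{the_M_point_motion} and \ref{the_overparametrized_limit}) come for free from the measure-level theorem but are inaccessible from your one-point argument. Two small points to fold explicitly into ``regular enough'': your local-error remainder requires $\sup_y\E_{\m}|\nabla\tilde R(y,\theta)|^{3}<\infty$ (third moments in $\theta$, uniformly in space), not just the $L^2_{\m}$ bounds on $\Cf^6_b$-norms stated in Corollary~\ref{cor_comparison_for_one_point_motion}; and your $\Cf^4_b$ bounds on $P^\eta_s f$ must be uniform in $\eta$, which holds because the $\eta$-dependent pieces of the drift and the diffusion coefficient $\sqrt\eta\,G$ have derivatives bounded uniformly for $\eta$ in a bounded set.
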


\textbf{(2) Matching multi-point statistics:} In a variety of works, a dynamical systems approach to the dynamics of SGD has been introduced \cite{Sa.Ts.Fu2022,Wu.Ma.E2018}. This aims at using the concepts of attractors, Lyapunov exponents, stochastic synchronization etc.\ in the analysis of SGD dynamics, for example, in order to analyze asymptotic global stability, that is, if
\begin{equation}
|Z_{n}^{\eta}(x)-Z_{n}^{\eta}(y)|\to0\quad\text{for }n\to\infty\label{eq:intro_synchr}
\end{equation}
in probability. As before, the systematic analysis of such dynamical behavior of SGD relies on the identification of appropriate universal limiting models. It is thus tempting to analyze the dynamical features of SGD by means of those of \eqref{eq:intro_SDE}. However, this is not correct, since \eqref{eq:intro_SDE} only captures the single-point motion of SGD, while dynamical features like stability \eqref{eq:intro_synchr} are properties of the multi-point motions.  More precisely, \eqref{eq:intro_SDE} captures the limiting behavior of the law of single motions $\law(Z_n^{\eta}(x))$, but not the joint multi-point laws $\law(Z_n^\eta(x_1),\ldots,Z_n^\eta(x_m))$ (see also Example~\ref{exa:two_point_motion}). As a second main contribution, in this work we prove that (SMF), in contrast to \eqref{eq:intro_SDE}, captures the correct multi-point distributions of SGD, and therefore opens the way for an analysis of the dynamical properties of its (stochastic) flow. 
\begin{theorem}[see Theorem~\ref{the_main_result} and Corollary~\ref{cor_muly_point_motion}] \label{the_M_point_motion}
Under the assumption of Theorem~\ref{the_one_point_motion}, for every $\Phi\in\Cf^4_b(\cP_2(\R^d))$ one has
\[
\sup_{\mu\in\cP_2(\R^d)}\sup_{n:n\eta\leq T}\left|\E\Phi\left(\mu\circ (X_{n\eta}^\eta)^{-1}\right)-\E\Phi\left(\mu\circ (Z_{n}^\eta)^{-1}\right)\right|\lesssim \eta^2,
\]
where $\mu\circ f^{-1}$ denotes the push forward of the measure $\mu$ under a map $f$. Furthermore, for every $m\in\N$ and $f\in\Cf_b^4(\R^{dm})$,
\[
\sup_{x_1,\ldots,x_m\in\R^d}\sup_{n:n\eta\leq T}\left|\E f(X_{n\eta}^\eta(x_1),\ldots, X_{n\eta}^\eta(x_m))-\E f(Z_{n}^\eta(x_1),\ldots,Z_n^\eta(x_m))\right|\lesssim \eta^2.
\]
\end{theorem}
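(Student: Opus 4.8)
The plan is to reduce both assertions to a single weak-error estimate on a lifted state space and to run the classical telescoping (semigroup) argument of Talay--Tubaro type. For the $m$-point statement I would work on $\R^{dm}$: since every coordinate $X_t^\eta(x_i)$ of the SMF \eqref{eq:intro_SMF} is driven by the \emph{same} cylindrical Wiener process $W$, the tuple $\mathbf X_t:=(X_t^\eta(x_1),\dots,X_t^\eta(x_m))$ solves one finite-dimensional SDE whose generator has second-order block between the $i$-th and $j$-th point equal to $\E_\m[G(\cdot,\theta)\otimes G(\cdot,\theta)]$. Likewise $\mathbf Z_n:=(Z_n^\eta(x_1),\dots,Z_n^\eta(x_m))$ is the coupled SGD recursion \eqref{equ_SGD_non_measure_depended_case} driven by the \emph{same} samples $\theta_n$. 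Writing $P_t$ for the Markov semigroup of $\mathbf X$ and $\bar P$ for the one-step transition of $\mathbf Z$, the error telescopes as
\begin{equation}
\E f(\mathbf X_{n\eta})-\E f(\mathbf Z_n)=\sum_{k=0}^{n-1}\bar P^{\,k}\bigl(P_\eta-\bar P\bigr)P_{(n-1-k)\eta}f,
\end{equation}
so that a one-step error of order $\eta^3$ together with stability of $\bar P$ and smoothness of $u(t,\cdot):=P_tf$ yields the global bound $n\cdot O(\eta^3)=O(\eta^2)$ on $\{n\eta\le T\}$.

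Next comes the one-step analysis. I would Taylor-expand the SGD increment $\mathbf Z_{n+1}-\mathbf Z_n$ to third order in $\eta$ and take conditional expectations, using the i.i.d.\ sampling of $\theta_n$ to compute the first two conditional moments; the crucial point is the \emph{cross}-second-moment between the $i$-th and $j$-th components, which equals $\eta^2\bigl(\nabla R\otimes\nabla R+\E_\m[G(x_i,\theta)\otimes G(x_j,\theta)]\bigr)$ precisely because the same $\theta_n$ is used for all points. On the continuous side I would apply \Itos formula to $u$ over $[n\eta,(n+1)\eta]$ and expand; the SMF generator reproduces exactly these first and cross-second moments, while the modified drift $-\tfrac\eta4\nabla|\nabla R|^2$ is what is needed to cancel the remaining $\eta^2$ term coming from the square of the gradient step. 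This is the heart of the argument and the reason SMF, unlike the SME \eqref{eq:intro_SDE} (whose independent Brownian drivers would force vanishing cross-covariances), matches the multi-point law; all mismatched contributions are genuinely $O(\eta^3)$.

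The hard part will be the regularity and stability inputs that make the telescoping sum summable uniformly. I would need $u(t,\cdot)=P_tf\in\Cf_b^4(\R^{dm})$ with derivative bounds uniform on $[0,T]$ and uniform in $(x_1,\dots,x_m)$, which I would obtain by differentiating the SMF flow in its initial condition and propagating $\Cf^4_b$-bounds through the associated variational equations using the regularity of $\tilde R(\cdot,\theta)$; simultaneously I would need uniform moment and stability bounds for the discrete flow $\mathbf Z_n$ so that $\bar P^{\,k}$ acts boundedly. Controlling the fourth-order flow derivatives uniformly over the non-compact index set is the main obstacle, and is exactly where the quantitative assumptions on $\tilde R$ are consumed.

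Finally, the measure-valued statement is the same scheme transported to $\cP_2(\R^d)$. I would view $\mu\mapsto\mu\circ(X_{n\eta}^\eta)^{-1}$ and $\mu\mapsto\mu\circ(Z_n^\eta)^{-1}$ as Markov evolutions on Wasserstein space, replace ordinary derivatives by Lions derivatives, and run the identical telescoping identity with $P_t,\bar P$ now acting on $\Cf^4_b(\cP_2(\R^d))$; the one-step moment matching is inherited from the finite-dimensional computation applied to the pairings of the pushforward, and the $O(\eta^3)$ one-step bound follows once the Lions derivatives of $t\mapsto\Phi(\mu\circ(X_t^\eta)^{-1})$ up to order four are bounded uniformly on $[0,T]$. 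Proving those Lions-derivative bounds, together with the differentiability of the pushforward in the measure argument, is the analogue of the obstacle above; granting Theorem~\ref{the_main_result} these are already in hand, and the two displays follow by specialization.
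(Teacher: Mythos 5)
Your proposal is correct, and its engine is the same as the paper's: a telescoping semigroup comparison with a one-step error of order $\eta^3$, obtained by matching the discrete conditional moments (where the shared sample $\theta_n$ produces the cross-covariances $\E_\m[G(x_i,\theta)\otimes G(x_j,\theta)]$) against a second-order expansion of the continuous generator, with the modified drift $-\tfrac{\eta}{4}\nabla|\nabla R|^2$ absorbing the quadratic gradient term. The organizational route differs in one respect worth noting. For the $m$-point display, the paper does no new analysis at all: in Corollary~\ref{cor_muly_point_motion} it introduces the extended loss $\tilde R^{\ext}(z,\theta)=\sum_{i}\tilde R(z_i,\theta)$ on $\R^{dm}$, observes that the associated SGD recursion and SMF decouple coordinatewise (precisely your tuple processes $\mathbf Z_n$ and $\mathbf X_t$), and then simply invokes the one-point estimate of Corollary~\ref{cor_comparison_for_one_point_motion} in dimension $dm$ --- which is itself a specialization of the single Wasserstein-level Theorem~\ref{the_main_result}, whose regularity input is the Kolmogorov equation on $\cP_2(\R^d)$ (Proposition~\ref{pro_kolmogorov_equation}). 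You instead re-run a classical Talay--Tubaro argument directly on $\R^{dm}$ with ordinary derivatives, which is more elementary for that display (no Lions calculus) but forces you to re-establish the $\Cf^4_b$ regularity of $P_tf$ by hand; your plan to do this by differentiating the SMF flow in its initial condition is the right one, since $\tilde A$ may be degenerate and no parabolic smoothing is available, so the bound must come from the smoothness of $G$ in $x$. The trade-off: the paper's route is more economical and automatically covers the distribution-dependent setting needed later, while yours isolates the finite-dimensional statement as a self-contained classical computation; for the measure-valued display both routes coincide, since there you (like the paper) ultimately run the identical telescoping on $\cP_2(\R^d)$ or cite Theorem~\ref{the_main_result} outright. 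One peripheral remark: your aside that the SME \eqref{eq:intro_SDE} has ``independent Brownian drivers forcing vanishing cross-covariances'' describes only one coupling; the paper's Example~\ref{exa:two_point_motion} couples the SME motions through the same Brownian motion and still gets the wrong cross-covariance $\Sigma^{1/2}(Y(x))\Sigma^{1/2}(Y(\bar x))$ --- either way the mismatch stands, so this does not affect your argument.
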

\textbf{(3) Overparameterized limits:}  As a third main contribution, we extend the small learning rate limit to also incorporate the infinite width limit. We here consider networks with quadratic loss function. Let $\cD \subseteq  \R^{n_0}\times \R^{k_0}$ be a given training data set\footnote{For simplicity we assume that the ground-truth is given by a function $f:\R^{n_0} \to \R^{k_0}$.} with inputs $\Theta =\{ \theta:\ (\theta,f(\theta)) \in \cD \}$ and labels $\{ f(\theta):\ (\theta,f(\theta))\in \cD \}$. For the approximation of $f$ we choose a parameterized hypotheses space $\mathcal M:= \{f^M(z,\cdot): z \in \R^{Md}\}$, $M,d \in \N$, where 
\begin{equation}\label{equ_function_f_M}
  f^M(z,\theta)= \frac{1}{ M }\sum_{ i=1 }^{ M } \Psi(z^i,\theta), \quad \theta \in \Theta,
\end{equation}
with $\Psi:\R^d \times \Theta \to \R^{k_0}$, $z=(z^i)_{i \in [M]}$ and $[M]:=\{1,\ldots,M\}$.
For example, one can choose $\mathcal M$ to be the space of response functions of fully connected feed-forward neural networks with one hidden layer containing $M$ hidden neurons. In that case, we choose a function $\phi:\R \to \R$, the activation function, and we write $z=(z^i)_{i \in [M]}$ with $z^i=(c^i,U^i,b^i) \in \R^{k_0}\times \R^{n_0}\times \R$ and $\Psi(z^i, \theta)=c^i \phi(U^i\cdot \theta+b^i)$. Then,
$$
    f^M(z,\theta)=\frac 1M \sum_{i=1}^M c^i \phi(U^i \cdot \theta+b^i), \quad \theta \in \Theta.
$$
The aim of risk minimization (with respect to the square loss) is to select a suitable model $f^M(z,\cdot )$ minimizing the risk $R(z)=\E_{\m}\tilde R(z,\theta)$, $z \in \R^{Md}$, for
\[
  \tilde R(z, \theta)= \frac{1}{ 2 }   |f(\theta)-f^M(z,\theta)|^2, \quad z \in \R^{Md}, \theta \in \Theta.
\]
As before, this optimization task is executed by the stochastic gradient descent algorithm~\eqref{equ_SGD_non_measure_depended_case} with the starting value $Z_0^\eta=(Z_0^{i,\eta})_{i \in [M]}$ being a tuple of i.i.d. random variables with distribution $\mu \in \cP_2(\R^d)$ that are independent of $\theta_n$, $n\in\N_0$.

A simple computation gives that 
\begin{align*}
  R(z)=  C_f- \frac{1}{ M }\sum_{ i=1 }^{ M } F(z^i)+ \frac{1}{ 2M^2 }\sum_{ i,j=1 }^{ M } K(z^i,z^j),
\end{align*}
where $C_f=\frac 12 \E_{\m}|f(\theta)|^2$ and
\begin{equation}\label{equ_definition_of_F_and_K}
    F(z^i)=\E_{\m}\left[ f(\theta) \cdot \Psi(z^i,\theta) \right],\quad K(z^i,z^j)=\E_{\m}\left[ \Psi(z^i,\theta) \cdot \Psi(z^j,\theta) \right].
\end{equation} 
Taking 
\begin{equation}\label{equ_defintion_of_V_and_G_in_introduction}
\begin{split}
  V(\nu,z^i)&= \nabla F(z^i)- \int_{\R^d} \nabla_{z^i}K(z^i,y ) \nu(dy),\\
  G(\nu,z^i,\theta)&=\left( f(\theta)- \int_{ \R^d  }   \Psi(y,\theta)\nu(dy)  \right)\nabla_{z^i}\Psi(z^i,\theta)\\
  &- \E_{\m}\left[ \left( f(\theta)- \int_{ \R^d  }   \Psi(y,\theta)\nu(dy)  \right)\nabla_{z^i}\Psi(z^i,\theta) \right]
\end{split}
\end{equation}
and replacing $\eta$ in~\eqref{equ_SGD_non_measure_depended_case} by $M\eta$, we can rewrite the expression for the dynamics of $Z_n^\eta=\left( Z^{i,\eta}_n \right)_{i \in [M]}$, $n \in \N_0$, as follows
\begin{equation} 
  \label{equ_sgd_overpar}
  \begin{split}
  Z^{i,\eta}_{n+1}&=Z^{i,\eta}_n+\eta V(\Gamma^{M,\eta}_n,Z^{i,\eta}_n)+\eta G(\Gamma^{M,\eta}_n,Z^{i,\eta}_n,\theta_n),\\
  \Gamma^{M,\eta}_n&= \frac{1}{ M } \sum_{ j=1 }^{ M } \delta_{Z^{j,\eta}_n}, \quad i \in [M],\ \  n \in \N_0,
  \end{split}
\end{equation}
where $\delta_{z}$ denotes the $\delta$-measure in $z$.

We obtain quantified estimates on the approximation of the dynamics of the empirical measure $\Gamma_n^{M,\eta}$, $n \in \N_0$, of SGD by the solution to a distribution dependent stochastic modified flow (DDSMF)
\begin{equation}
  \label{equ_modified_sde_with_interaction_for_sgd}
  \begin{split} 
    dX_t^\eta(x)&= \left[V(\Lambda_t^\eta,X_t^\eta(x))- \frac{\eta}{ 4 }\nabla |V(\Lambda_t^\eta,X_t^\eta(x))|^2- \frac{\eta}{ 4 }\left\langle\D |V(\Lambda_t^\eta,X_t^\eta(x))|^2,\Lambda_t^\eta\right\rangle\right]dt\\
    &+\sqrt{ \eta }\int_{ \Theta }   G(\Lambda_t^\eta,X_t^\eta(x),\theta)W(d \theta,dt), \\
    X_0^\eta(x)&= x, \quad \Lambda_t^\eta=\mu\circ (X_t^\eta)^{-1}, \quad x \in \R^d ,\ \ t\geq 0,
  \end{split}
\end{equation}
where $\D$ denotes the differentiation with respect to the measure dependent argument in the sense of Lions\footnote{For more details see Section~\ref{sub:measure_valued_diffusion}}, $\langle \varphi ,\nu  \rangle$ denote the integration of a function $\varphi: \R^d \to \R$ with respect to a measure $\nu$ and $W$ is a cylindrical Wiener process on $\L{\Theta}{\R}{\m}$. We remark that 
\[ 
\frac{1}{2}\left\langle\D |V(\nu,z)|^2,\nu\right\rangle= V(\nu,z)\left\langle\nabla_x\nabla_zK(z,x),\nu(dx)\right\rangle,
\]
according to the form of $V$ in~\eqref{equ_defintion_of_V_and_G_in_introduction} and properties of Lions derivative.

\begin{theorem}[see Theorem~\ref{the_main_result}, Corollary~\ref{cor_overparam_case} and Remark~\ref{rem_relationship_between_M_and_eta}]\label{the_overparametrized_limit}
Let $\Psi$ be regular enough and $T>0$. Then for every $\Phi\in\Cf^4_b(\cP_2(\R^d))$ and $\mu\in\cP_2(\R^d)$ with a finite $p$ moment with $p>2$, one has
\[
\sup_{n:n\eta\leq T}\left|\E\Phi(\Lambda_{n\eta}^\eta)-\E\Phi(\Gamma_n^{M,\eta})\right|\lesssim \eta^2
\]
for every $\eta>0$ and $M$ large enough.
\end{theorem}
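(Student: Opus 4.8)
The plan is to run a backward–semigroup (mimicking) argument at the level of functionals on $\cP_2(\R^d)$, combining a telescoping decomposition with a one-step weak-error expansion. Write $P_t^\eta$ for the semigroup associated with the measure-valued process $\Lambda_t^\eta$ solving the DDSMF~\eqref{equ_modified_sde_with_interaction_for_sgd}, so that $(P_t^\eta\Phi)(\nu)=\E\Phi(\Lambda_t^\eta)$ when the flow is started from an initial law $\nu$, and set $\Phi_j:=P_{j\eta}^\eta\Phi$ for $j=0,\dots,n$. Denoting by $Q_\eta^M$ the one-step transition operator of the empirical-measure chain $\Gamma_n^{M,\eta}$ generated by one step of~\eqref{equ_sgd_overpar}, I would decompose
\begin{align*}
\E\Phi(\Lambda_{n\eta}^\eta) - \E\Phi(\Gamma_n^{M,\eta}) &= \E\Phi_n(\mu) - \E\Phi_n(\Gamma_0^{M,\eta}) \\
&\quad + \sum_{k=0}^{n-1}\E\left[(P_\eta^\eta\Phi_{n-k-1})(\Gamma_k^{M,\eta}) - (Q_\eta^M\Phi_{n-k-1})(\Gamma_k^{M,\eta})\right],
\end{align*}
so that everything reduces to (i) a local one-step error $e(\nu):=(P_\eta^\eta\Psi)(\nu)-(Q_\eta^M\Psi)(\nu)$ for the smooth functionals $\Psi=\Phi_{n-k-1}$, and (ii) an initial error stemming from $\Gamma_0^{M,\eta}\approx\mu$.

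For the local error I would expand both operators to order $\eta^2$. On the continuous side, Itô's formula for functionals of measures (Lions calculus) gives $(P_\eta^\eta\Psi)(\nu)=\Psi(\nu)+\eta\,\cA^\eta\Psi(\nu)+\tfrac{\eta^2}{2}(\cA^\eta)^2\Psi(\nu)+O(\eta^3)$, where $\cA^\eta$ is the generator of the DDSMF: a transport part $\langle\D\Psi,V\rangle_\nu$, the $O(\eta)$ drift corrections, and a second-order (common-noise) part built from $\E_\m[G\otimes G]$ through the second Lions derivative $\D^2\Psi$. On the discrete side, I would apply the Lions–Taylor expansion of $\Psi$ at $\Gamma_{k+1}^{M,\eta}$ around $\Gamma_k^{M,\eta}$ and take the conditional expectation $\E_\m$ over the fresh sample $\theta_k\sim\m$. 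Using $\E_\m G=0$, the identity $\E_\m[\delta Z^i\otimes\delta Z^j]=\eta^2(V^i\otimes V^j+\E_\m[G^i\otimes G^j])$ for the increments $\delta Z^i:=Z_{k+1}^{i,\eta}-Z_k^{i,\eta}$, and the fact that $\theta_k$ is \emph{common} to all particles, the off-diagonal terms reproduce exactly the $\D^2\Psi$ common-noise contribution of $\cA^\eta$, the diagonal $\D^2$ terms contribute only at order $\eta^2/M$, and the diagonal $\nabla\D$ terms match the corresponding piece of $\cA^\eta$. The remaining mismatch lives in the first-derivative (transport) channel and is, schematically, of order $\eta^2\langle\D\Psi,(V\cdot\nabla)V\rangle$ in the spatial variable together with a measure-transport counterpart; the modified drift terms $-\tfrac\eta4\nabla|V|^2-\tfrac\eta4\langle\D|V|^2,\Lambda_t^\eta\rangle$ are engineered to cancel precisely this mismatch, using that $V(\nu,\cdot)$ is a spatial gradient (so that $(V\cdot\nabla)V=\tfrac12\nabla|V|^2$) together with the analogous identity $\tfrac12\langle\D|V|^2,\nu\rangle=V\langle\nabla_x\nabla_zK,\nu\rangle$ for the measure-derivative part. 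Consequently $e(\nu)=O(\eta^3)+O(\eta^2/M)$, uniformly in $\nu$, with constants controlled by $\sup_j\|\Phi_j\|_{\Cf^4_b(\cP_2(\R^d))}$ and by uniform moment bounds on the chain.

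Summation then gives the estimate: the $n\le T/\eta$ local errors contribute $n\cdot(O(\eta^3)+O(\eta^2/M))=O(\eta^2)+O(\eta/M)$, and taking $M$ large (as quantified in Remark~\ref{rem_relationship_between_M_and_eta}) pushes the $\tfrac1M$ terms below $\eta^2$. The initial error $\E\Phi_n(\mu)-\E\Phi_n(\Gamma_0^{M,\eta})$ is handled separately: since $\Gamma_0^{M,\eta}$ is the empirical measure of $M$ i.i.d.\ $\mu$-samples and $\Phi_n$ is Lipschitz on $\cP_2(\R^d)$ uniformly for $n\eta\le T$, this error is bounded by $\E\,\W_2(\Gamma_0^{M,\eta},\mu)$, which tends to $0$ as $M\to\infty$ with a quantitative rate once $\mu$ has a finite $p$-th moment with $p>2$; this is exactly where the moment hypothesis enters, and it also secures the uniform-in-$M$ moment bounds used above.

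The main obstacle I anticipate is \emph{not} the algebra of the one-step matching but the regularity theory needed to make the telescoping rigorous: one must show that $\Phi\in\Cf^4_b(\cP_2(\R^d))$ implies $\sup_{t\le T}\|P_t^\eta\Phi\|_{\Cf^4_b(\cP_2(\R^d))}<\infty$, i.e.\ that the DDSMF semigroup propagates bounded Lions derivatives up to fourth order, with bounds uniform in $\eta\in(0,1]$. This requires differentiating the McKean–Vlasov flow $X_t^\eta$ with respect to its initial distribution in the presence of the common cylindrical noise $W$, and controlling the resulting first- through fourth-order variation processes and their moments under the regularity assumptions on $\Psi$ (hence on $V$ and $G$). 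Establishing these flow-differentiability and moment estimates, together with the accompanying control of the empirical-chain moments that keeps the local-error constants uniform in $M$, is where the bulk of the technical work lies, and is presumably the content of the general Theorem~\ref{the_main_result}.
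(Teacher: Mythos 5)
Your proposal is correct and assembles the same ingredients as the paper --- a telescoping semigroup comparison with one-step generator matching at order $\eta^3$ (the modified drift cancelling the $\tfrac12\cL_1^2$ transport mismatch), propagation of fourth-order Lions regularity by the DDSMF semigroup (Proposition~\ref{pro_kolmogorov_equation}, which the paper imports from Wang's work rather than reproves), Lipschitz dependence of the flow on its initial measure (Proposition~\ref{pro_continuous_dependence_of_solution}), and the Fournier--Guillin rate together with the choice of $M$ relative to $\eta$ in Remark~\ref{rem_relationship_between_M_and_eta} --- but it organizes them differently, and the difference is instructive. You telescope between the flow started at $\mu$ and the particle chain started at $\mu^M$, and you expand the one-step operator $Q_\eta^M$ at the particle level; this is exactly why you pick up ``diagonal'' contributions of size $\eta^2/M$ and a local error $O(\eta^3)+O(\eta^2/M)$. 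The paper instead proves Theorem~\ref{the_main_result}: a comparison, uniform over \emph{all} initial measures in $\cP_2(\R^d)$, between the measure-valued chain~\eqref{equ_sgd_general_form} and the DDSMF started from the \emph{same} measure. Because the sample $\theta_n$ is common to all particles, the SGD particle system is precisely this chain started at the random empirical measure, $\Gamma_n^{M,\eta}=\Gamma_n^{\eta}(\mu^M)$, so conditioning on $\sigma(Z_0^{i,\eta},\,i\in[M])$ yields the $C\eta^2$ bound with \emph{no} $1/M$ correction at all: in the measure-level Lions--Taylor expansion, the diagonal of the double $\D^2$-integral appears identically on both the chain side and the flow side, so your $O(\eta^2/M)$ term is an artifact of particle-level bookkeeping rather than a genuine discrepancy. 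All $M$-dependence is thereby isolated in the initial condition and handled exactly as in your last step (Lipschitz dependence plus the quantitative law of large numbers, which is where the $p>2$ moment assumption enters). Your route still proves the theorem, since summation gives $O(\eta^2)+O(\eta/M)$ and the statement permits $M$ large relative to $\eta$; the paper's factorization is simply cleaner and yields the stronger intermediate fact that the dynamics themselves carry no $1/M$ error. Finally, two points you flag are indeed essential and are stated less explicitly in the paper: the cancellation $(V\cdot\nabla)V=\tfrac12\nabla|V|^2$ uses the spatial gradient structure of $V$, which holds here because $V(\nu,z)=\nabla_z\bigl(F(z)-\int_{\R^d}K(z,y)\,\nu(dy)\bigr)$, and the uniform-in-$\eta$ bound on $\|P_t^\eta\Phi\|_{\Cf^4_b(\cP_2(\R^d))}$ is the technical crux, supplied by the Kolmogorov-equation regularity rather than proved from scratch.
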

This extends the framework of SMEs and SMFs to (\ref{equ_modified_sde_with_interaction_for_sgd}) which can thus serve as the starting point to analyze the stochastic dynamics of SGD in large, shallow networks.

{\bf Overview of the literature.} Stochastic modified equations as limiting objects of SGD in the regime of small learning rates have been introduced by Li, Tai and E in~\cite{Li:2017, Li_Tai:2019}. Following these
original papers several results were derived for diffusion approximations with SMEs, e.g. generator based proofs~\cite{Feng:2018, Hu:2019} and uniform-in-time estimates for strongly convex objective functions~\cite{Lei:2022}. For a discussion on the validity of the diffusion approximation for finite (non-infinitesimal) learning rate see~\cite{li2021validity}.

In~\cite{Ro.Va2018,Ch.Ba2018,Sirignano_LLN_2020,Javanmard_Mondelli:2020,Mei_Montanari:2018}, the convergence of gradient descent dynamics for overparameterized neural networks to a Wasserstein gradient flow has been analyzed. The conservative SPDE describing the mean-field limit that incorporates the fluctuations of the stochastic gradient descent was suggested by Rotskoff and Vanden-Eijnden in~\cite{Ro.Va2018a,Rotskoff:2022}. The rigorous study of the well-posedness of this conservative SPDE and proof of quantified central limit theorem has been done in~\cite{Gess_SMFE:2022}, using the observation that its solutions can be described by the SDE with interaction~\eqref{equ_sde_with_interaction} below, which was investigated e.g., in~\cite{Dorogovtsev:2007,Dorogovtsev:2010,Pilipenko:2006,Belozerova:2020,Wang_Feng:2021}  (see also~\cite{Dorogovtsev:2002,Wang_Feng:2021,Kurtz:1999,Carmona_Delarue:2016} for its connection with McKean--Vlasov SDEs with common noise). It should be noted that the stochastic modified flows proposed in this work are of a particular form of the SDE with interaction~\eqref{equ_sde_with_interaction}.  In \cite{Sirignano:2020,Ro.Va2018}, a linear SPDE has been rigorously identified in the context of central limit fluctuations of stochastic gradient descent in the overparameterized regime.

{\bf The paper is organized as follows:} In Section~\ref{sec:measure_valued_diffusion_and_sde_with_interaction}, we introduce a stochastic differential equation with interaction (see~\eqref{equ_sde_with_interaction}) that covers both the SMF~\eqref{eq:intro_SMF} and the DDSMF~\eqref{equ_modified_sde_with_interaction_for_sgd} and recall existence and uniqueness results assuming Lipschitz-continuity of its coefficients. Moreover, we state a result for the continuous dependence of solutions to the SDE with interaction with respect to its initial distribution, as well as an analog of Kolmogorov's equation in the setting of SDEs with interaction. Section~\ref{sub:uniform_in_time_diffusion_approximation} is devoted to the main result of this article, Theorem~\ref{the_main_result}, which compares the dynamics of a discrete time Markov chain with those of a solution to a corresponding SDE with interaction. Theorem~\ref{the_one_point_motion}, Theorem~\ref{the_M_point_motion} and Theorem~\ref{the_overparametrized_limit} then follow as consequences of Theorem~\ref{the_main_result}, see Corollary~\ref{cor_comparison_for_one_point_motion}, Corollary~\ref{cor_muly_point_motion} and Corollary~\ref{cor_overparam_case}, respectively.

\section{Measure-valued diffusion and stochastic modified flows}
\label{sec:measure_valued_diffusion_and_sde_with_interaction}

The goal of this section is to prove the well-posedness for stochastic modified flows and investigate some properties of the associated semigroup. We recall that $\cP_2(\R^d )$ denotes the space of probability measures $\mu$ on $\R^d $ such that
\[
  \int_{ \R^d  }   |x|^2\mu(dx)<\infty 
\]
 with the Wasserstein distance defined by
\[
  \cW_2(\mu,\nu)= \inf\limits_{ \chi \in \Pi(\mu,\nu) }\left( \int_{ \R^d  }   \int_{ \R^d  }   |x-y|^2\chi(dx,dy)   \right),
\]
where $\Pi(\mu,\nu)$ is the set of all probability measures on $\R^d \times \R^d $ with marginals $\mu$ and $\nu$. It is well know that $\cP_2(\R^d )$ equipped with the Wasserstein distance $\cW_2$ is a Polish space.

Let $\L{E}{\R^k}{\nu}$ be the space of all $2$-integrable functions from a  measure space $(E,\cE,\nu)$ to $\R^k $ with the usual inner product $\langle \cdot  ,\cdot   \rangle_{\nu}$ and the associated norm $\|\cdot \|_{\nu}$. We will further fix a measure space $(\Theta,\cG,\m)$ such that $\m$ is a finite measure and the space $\L{\Theta}{\R}{\m}$ is separable. We will also consider a cylindrical Wiener process $W_t$, $t\geq 0$, on $\L{\Theta}{\R}{\m}$ defined on a filtered complete probability space $(\Omega,\F, (\F_t)_{t\geq0}, \p)$, that is,
\begin{enumerate}
    \item [(i)] for every $t\geq 0$, the map $W_t:\L{\Theta}{\R}{\m}\to\L{\Omega}{\R}{\p}$ is linear;
    \item[(ii)] for every $h\in\L{\Theta}{\R}{\m}$, $W_t(h)$, $t\geq0$, is an $(\F_t)_{t\geq 0}$-Brownian motion with $\Var W_t(h)=\|h\|_{\m}^2t$.
\end{enumerate}
We will assume that $(\F_{t})_{t\geq 0}$ is the complete right-continuous filtration  generated by $W_t$, $t\geq 0$. For an $(\F_{t})_{t\geq 0}$-progressively measurable $\L{\Theta}{\R^k}{\m}$-valued process $g(t,\cdot)=\{g(t,\theta),\ \theta\in\Theta\}$, $t \geq 0$, with 
\[
  \int_{ 0 }^{ t } \|g(s,\cdot)\|_{\m}^2 ds<\infty 
\]
a.s. for every $t\geq 0$, we will write\footnote{For the definition of the integral with respect to a cylindrical Wiener process see, e.g.,~\cite[Section~2.2.4]{Gawarecki:2011}}
\[
  \int_{ 0 }^{ t } \int_{ \Theta }   g(s,\theta)W(d \theta,ds):=\int_{ 0 }^{ t } \Upsilon(s)dW_s  
\]
for $\Upsilon(s)h=\langle g(s,\cdot) , h \rangle_{\m}=\left(\langle g_i(s, \cdot ) , h \rangle_{\m}\right)_{i \in [k]}$, $h \in \L{\Theta}{\R}{\m}$.

\subsection{Stochastic modified flows}
\label{sub:sde_with_interaction}

For measurable functions $B:[0,\infty)\times \cP_2(\R^d )\times \R^d \to \R^d $, $G:[0,\infty)\times \cP_2(\R^d ) \times \R^d  \to \L{\Theta}{\R^d}{\m}$ and a probability measure $\mu \in \cP_2(\R^d )$, we consider the following stochastic differential equation
\begin{equation}
  \label{equ_sde_with_interaction}
    \begin{split}
      dX_t(x)&= B(t,\Lambda_t,X_t(x))dt+ \int_{ \Theta }   G(t,\Lambda_t, X_t(x), \theta)W(d \theta, dt), \\
       X_0(x)&= x, \quad \Lambda_t=\mu\circ X^{-1}_t, \quad x \in \R^d ,\ \ t\geq 0.
  \end{split}
\end{equation}
It is clear that the equations~\eqref{eq:intro_SMF} and~\eqref{equ_modified_sde_with_interaction_for_sgd} can be written in the form of~\eqref{equ_sde_with_interaction}. Therefore, in this section we will only focus on~\eqref{equ_sde_with_interaction}  which is called the stochastic differential equation with interaction and was studied, e.g. in~\cite{Wang_Feng:2021,Kotelenez:1997,Pilipenko:2006}. Let $\B(E)$ denote the Borel $\sigma$-algebra on a topological space $E$. Following the definition from~\cite[Definition~2.1.1]{Dorogovtsev:2007} or~\cite[Definition 2.5]{Gess_SMFE:2022}, we introduce the notion of a solution to~\eqref{equ_sde_with_interaction}. 

\begin{definition}
  \label{def_definition_of_strong_solution}
  A family of continuous processes $\{ X_t(x),\ t\geq 0 \}$, $x \in \R^d $, is called a {\it (strong) solution} to the SDE with interaction~\eqref{equ_sde_with_interaction} with initial mass distribution $\mu \in \cP_2(\R^d )$ if, for each $t\ge 0$ the restriction of $X$ to the time interval $[0,t]$ is $\B([0,t])\otimes\B(\R^d )\otimes \F_t$-measurable, $\Lambda_t=\mu\circ X^{-1}_t$, $t \geq 0$, is a continuous process in $\cP_2(\R^d )$ and for every $x \in \R^d $, a.s.,
  \[
    X_t(x)=x+\int_{ 0 }^{ t } B(s,\Lambda_s,X_s(x))ds+\int_{ 0 }^{ t } \int_{ \Theta }   G(s,\Lambda_s,X_s(x),\theta) W(d \theta,ds),
  \]
  for all $t\geq 0$. For convenience, we will also call the measure-valued process $\Lambda_t$, $t\geq 0$, a solution to~\eqref{equ_sde_with_interaction}.
\end{definition}

Let $\phi_p(x)=|x|^p$, $x \in \R^d $. The following theorem was proved in~\cite{Gess_SMFE:2022}. See Theorem~2.9 and Corollary~2.10 for the well-posedness and the estimates; the existence of a continuous modification of $X$ was observed in the proof of Theorem~2.9 ibid.

\begin{theorem}
  \label{the_well_posedness_of_sde_with_interaction}
  Assume that the coefficients $B$, $G$ of~\eqref{equ_sde_with_interaction} are Lipschitz continuous with respect to $(\mu,x) \in \cP_2(\R^d )\times\R^d $,  that is, for every $T>0$ there exists $L>0$ such that for each $t \in [0,T]$, $\mu,\nu \in \cP_2(\R^d )$ and $x,y \in \R^d $ 
  \begin{equation}
  \label{equ_lipschitz_continuity}
    \begin{split} 
      |B(t,\mu,x)-B(t,\nu,y)|&+ \|G(t,\mu,x,\cdot )-G(t,\nu,y, \cdot )\|_{\m}\\
      &\leq L\left( \cW_2(\mu,\nu)+|x-y| \right),
    \end{split}
  \end{equation}
  and 
  \begin{equation} 
  \label{equ_boundedness_assumption}
    |B(t,\delta_0,0)|+\| G(t,\delta_0,0,\cdot )\|_{\m}\leq L,
  \end{equation}
  where $\delta_0$ denotes the $\delta$-measure at $0$ on $\R^d $.  Then, for every $\mu \in \cP_2(\R^d )$, there exists a unique strong solution $X_t(x)$, $t\geq 0$, $x \in \R^d$, to the SDE with interaction (\ref{equ_sde_with_interaction}). Moreover, there exists a version of $X_\cdot(x)$, $x \in \R^d $, that is a continuous in $(t,x)$, and for every $T>0$ and $p\geq 2$ there exists a constant $C>0$ such that 
  \[
    \E \sup\limits_{ t \in [0,T] }|X_t(x)|^p\leq C(1+ \langle \phi_p , \mu\rangle+|x|^p),
  \]
  for all $x \in \R^d $. In particular, 
  \[
     \E \sup\limits_{ t \in [0,T] }\langle \phi_p , \Lambda_t \rangle\leq C (1+\langle \phi_p , \mu \rangle),
  \]
  where $\Lambda_t=\mu\circ X_t^{-1}$.
\end{theorem}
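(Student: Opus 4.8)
The plan is to recast~\eqref{equ_sde_with_interaction} as a fixed point problem for the measure-valued component and to solve it by a contraction argument, exploiting that once the interaction measure is frozen the equation becomes a standard SDE driven by the cylindrical Wiener process. Let $\cC_T$ denote the space of continuous, $(\F_t)$-adapted $\cP_2(\R^d)$-valued processes $\Lambda=(\Lambda_t)_{t\le T}$ with $\E\sup_{t\le T}\langle\phi_2,\Lambda_t\rangle<\infty$, metrized by $\rho_T(\Lambda,\Lambda')=(\E\sup_{t\le T}\cW_2(\Lambda_t,\Lambda'_t)^2)^{1/2}$. For a fixed $\Lambda\in\cC_T$ the coefficients $x\mapsto B(t,\Lambda_t,x)$ and $x\mapsto G(t,\Lambda_t,x,\cdot)$ are, by~\eqref{equ_lipschitz_continuity} and~\eqref{equ_boundedness_assumption}, uniformly Lipschitz and of linear growth in $x$, so for each $x\in\R^d$ the classical theory of SDEs driven by a cylindrical Wiener process (Hilbert-space-valued It\^o calculus together with a Banach fixed point) yields a unique strong solution $X^{\Lambda}_\cdot(x)$. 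Setting $\Phi(\Lambda)_t:=\mu\circ(X^{\Lambda}_t)^{-1}$ defines a map $\Phi:\cC_T\to\cC_T$, and a fixed point of $\Phi$ is precisely a solution to~\eqref{equ_sde_with_interaction} in the sense of Definition~\ref{def_definition_of_strong_solution}.

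The contraction estimate is the heart of the argument. Given $\Lambda,\Lambda'\in\cC_T$, I would couple the two frozen SDEs through the same starting point $x$ and the same noise $W$, write $X^{\Lambda}_t(x)-X^{\Lambda'}_t(x)$ as a drift integral plus a stochastic integral, and bound it using the Burkholder--Davis--Gundy inequality for the cylindrical integral (whose bracket is $\int_0^t\|G(s,\Lambda_s,X^\Lambda_s(x),\cdot)-G(s,\Lambda'_s,X^{\Lambda'}_s(x),\cdot)\|_\m^2\,ds$) together with~\eqref{equ_lipschitz_continuity}. This gives
\[
\E\sup_{s\le t}|X^{\Lambda}_s(x)-X^{\Lambda'}_s(x)|^2\le C\int_0^t\E\Big(\cW_2(\Lambda_s,\Lambda'_s)^2+|X^{\Lambda}_s(x)-X^{\Lambda'}_s(x)|^2\Big)\,ds,
\]
and Gr\"onwall's lemma removes the last term. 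Integrating against $\mu(dx)$ and using the synchronous coupling bound $\cW_2(\mu\circ(X^{\Lambda}_t)^{-1},\mu\circ(X^{\Lambda'}_t)^{-1})^2\le\int_{\R^d}|X^{\Lambda}_t(x)-X^{\Lambda'}_t(x)|^2\,\mu(dx)$ yields $\rho_{T_0}(\Phi(\Lambda),\Phi(\Lambda'))^2\le C\,T_0\,\rho_{T_0}(\Lambda,\Lambda')^2$. Choosing $T_0$ small makes $\Phi$ a contraction on $\cC_{T_0}$; since $T_0$ depends only on $L$, the solution is extended to $[0,T]$ by patching successive subintervals, and the same estimate with $\Lambda=\Lambda'$ gives pathwise uniqueness.

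For the moment bound I would apply It\^o's formula to $|X_t(x)|^p$ (first to $|X_t(x)|^2$ and then bootstrap), using the linear growth of $B$ and $G$ that follows from combining~\eqref{equ_lipschitz_continuity} and~\eqref{equ_boundedness_assumption}, together with $\cW_2(\Lambda_t,\delta_0)^2=\langle\phi_2,\Lambda_t\rangle=\int_{\R^d}|X_t(y)|^2\,\mu(dy)$. After taking expectations the stochastic integral drops out and Gr\"onwall's lemma gives $\E\sup_{t\le T}|X_t(x)|^p\le C(1+\langle\phi_p,\mu\rangle+|x|^p)$; integrating this in $x$ against $\mu$ and recalling $\langle\phi_p,\Lambda_t\rangle=\int|X_t(y)|^p\mu(dy)$ produces the stated bound on $\E\sup_{t\le T}\langle\phi_p,\Lambda_t\rangle$. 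Finally, to obtain the version jointly continuous in $(t,x)$ I would establish the increment estimates $\E|X_t(x)-X_t(y)|^p\le C|x-y|^p$ and $\E|X_t(x)-X_s(x)|^p\le C|t-s|^{p/2}$ for $p$ large enough, again via Burkholder--Davis--Gundy and Gr\"onwall, and invoke the Kolmogorov continuity theorem on $\R^d\times[0,T]$.

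I expect the main obstacle to be the self-referential measure dependence: unlike a single McKean--Vlasov SDE, one must run the fixed-point argument at the level of the whole flow $x\mapsto X_t(x)$ and control the Wasserstein distance between the random push-forwards $\mu\circ X_t^{-1}$. The decisive simplification is that the synchronous coupling (same $x$, same $W$) bounds $\cW_2$ of the push-forwards by the $\L{\R^d}{\R^d}{\mu}$-distance of the flows, converting the measure-level contraction into the trajectory-level Gr\"onwall estimate above; the remaining technical care concerns the Burkholder--Davis--Gundy inequality for the $\L{\Theta}{\R^d}{\m}$-valued cylindrical integral and the increment moment bounds required by Kolmogorov's criterion.
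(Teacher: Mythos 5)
The paper does not actually prove this theorem: it is imported from \cite{Gess_SMFE:2022} (Theorem~2.9 and Corollary~2.10 there), so there is no internal proof to compare against. Your freeze-the-measure fixed-point scheme --- solve the SDE with frozen measure argument, push $\mu$ forward, contract in $\rho_T$ via synchronous coupling, Burkholder--Davis--Gundy and Gr\"onwall --- is in substance the standard argument used for SDEs with interaction and is the route taken in the cited reference, so the strategy is sound.

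Three points would need repair before this is a complete proof. (i) In the moment bound you cannot simultaneously ``take expectations so the stochastic integral drops out'' and keep the supremum inside the expectation: controlling $\E\sup_{t\le T}|X_t(x)|^p$ requires Burkholder--Davis--Gundy applied to the martingale part, exactly as in your contraction step; taking expectations alone only bounds $\sup_{t\le T}\E|X_t(x)|^p$. (ii) The single-point and integrated bounds are coupled, because the coefficients grow like $\cW_2(\Lambda_t,\delta_0)=\langle\phi_2,\Lambda_t\rangle^{1/2}$, which is controlled only through the flow integrated against $\mu$; as written you first derive the single-point estimate (whose right-hand side already contains $\langle\phi_p,\mu\rangle$) and then integrate it to control $\E\sup_{t}\langle\phi_p,\Lambda_t\rangle$, which is circular. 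One must first (or simultaneously) close a Gr\"onwall inequality for $\int_{\R^d}\E\sup_{s\le t}|X_s(y)|^p\,\mu(dy)$, using Jensen to bound $\langle\phi_2,\Lambda_s\rangle^{p/2}\le\langle\phi_p,\Lambda_s\rangle$, and then feed the result back into the single-point estimate. (iii) ``Patching successive subintervals'' silently invokes well-posedness restarted from an $\F_{T_0}$-measurable random initial measure and random initial flow, which you have not established; this is cleanly avoided either by keeping the time integral in your contraction estimate, which yields $\rho_T(\Phi^n(\Lambda),\Phi^n(\Lambda'))^2\le (CT)^n(n!)^{-1}\rho_T(\Lambda,\Lambda')^2$ so that a high iterate of $\Phi$ is a contraction on all of $[0,T]$, or by an exponentially weighted norm. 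A final small point: to define $\mu\circ(X_t^\Lambda)^{-1}$ at all one needs a version of the frozen-measure solution that is jointly measurable in $(x,\omega)$; this should be recorded (via your increment estimates in $x$ and Kolmogorov's criterion) before the fixed-point map is run. None of these affects the viability of the approach.
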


From now on, we will only consider the version $X_t(x)$, $t \geq 0$, $x \in \R^d $, of a solution to the SDE with interaction~\eqref{equ_sde_with_interaction} which is continuous in $(x,t)$. In order to reflect the dependency on the initial mass distribution we will write $X_t(\mu,x)$ and $\Lambda_t(\mu)$ instead of $X_t(x)$ and $\Lambda_t$. We next recall the result on the continuous dependence of $\Lambda_t(\mu)$, $t\geq 0$, with respect to the initial condition $\mu$, that was obtained in~\cite[Theorem~2.14]{Gess_SMFE:2022}.

\begin{proposition}\label{pro_continuous_dependence_of_solution}
  Under the assumption of Theorem~\ref{the_well_posedness_of_sde_with_interaction}, for every $T>0$ there exists a constant $C>0$ depending only on $T$ and the Lipschitz constant $L$ such that 
  \[
  \E\sup_{t\in[0,T]}\left|X_t(\mu,x)-X_t(\nu,y)\right|^2\leq C\left(\cW_2^2(\mu,\nu)+|x-y|^2\right)
  \]
  and 
  \[
  \E \sup_{t \in[0,T]}\cW_2^2\left(\Lambda_t(\mu),\Lambda_t(\nu)\right)\leq C\cW_2^2(\mu,\nu)
  \]
  for all $\mu,\nu\in\cP_2(\R^d)$ and $x,y\in\R^d$.
\end{proposition}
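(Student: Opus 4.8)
The plan is to run a Gronwall argument on the difference of the two solutions, in which the self-referential Wasserstein term is tamed by coupling the initial measures. Fix $T>0$ and $\mu,\nu\in\cP_2(\R^d)$, and choose an optimal coupling $\chi\in\Pi(\mu,\nu)$, so that $\int|x'-y'|^2\chi(dx',dy')=\cW_2^2(\mu,\nu)$. For $x,y\in\R^d$ set $D_t(x,y)=X_t(\mu,x)-X_t(\nu,y)$ and $\rho(t;x,y)=\E\sup_{s\le t}|D_s(x,y)|^2$; the joint continuity of the solution from Theorem~\ref{the_well_posedness_of_sde_with_interaction} ensures that all the quantities below are jointly measurable and integrable against $\chi$.

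First I would derive a pointwise (in $x,y$) estimate. Subtracting the two integral equations, taking $\sup_{s\le t}$ and expectations, and splitting into the drift and martingale parts, I would bound the drift term by Cauchy--Schwarz and the stochastic integral by the Burkholder--Davis--Gundy inequality. The Lipschitz hypothesis~\eqref{equ_lipschitz_continuity} then controls both coefficient differences by $L^2\big(\cW_2(\Lambda_s(\mu),\Lambda_s(\nu))+|D_s(x,y)|\big)^2$, leading to
\[
\rho(t;x,y)\le 3|x-y|^2+C\int_0^t\Big(\E\,\cW_2^2(\Lambda_s(\mu),\Lambda_s(\nu))+\rho(s;x,y)\Big)\,ds,
\]
with $C=C(T,L)$, where $|D_s(x,y)|^2\le\sup_{u\le s}|D_u(x,y)|^2$ is used to replace the integrand by $\rho(s;x,y)$.

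The key step is to close this inequality on the Wasserstein term. Pushing the coupling $\chi$ forward through the (distinct) flow maps $(x',y')\mapsto(X_s(\mu,x'),X_s(\nu,y'))$ produces, for each fixed $\omega$, a coupling of $\Lambda_s(\mu)=\mu\circ X_s(\mu,\cdot)^{-1}$ and $\Lambda_s(\nu)=\nu\circ X_s(\nu,\cdot)^{-1}$, since the marginals of $\chi$ are $\mu$ and $\nu$. Hence, $\p$-a.s. and for every $s$,
\[
\cW_2^2(\Lambda_s(\mu),\Lambda_s(\nu))\le\int_{\R^d\times\R^d}|D_s(x',y')|^2\,\chi(dx',dy').
\]
Integrating the pointwise inequality for $\rho$ against $\chi$ and setting $P(t)=\int\rho(t;x',y')\,\chi(dx',dy')$, the Wasserstein term is then bounded by $P(s)$ and the loop closes:
\[
P(t)\le 3\cW_2^2(\mu,\nu)+C'\int_0^t P(s)\,ds.
\]
Gronwall's lemma yields $P(t)\le C''\cW_2^2(\mu,\nu)$ on $[0,T]$. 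The second assertion follows immediately: the coupling bound also gives, pathwise, $\sup_{s\le t}\cW_2^2(\Lambda_s(\mu),\Lambda_s(\nu))\le\int\sup_{s\le t}|D_s(x',y')|^2\chi(dx',dy')$, so by Tonelli $\E\sup_{s\le T}\cW_2^2(\Lambda_s(\mu),\Lambda_s(\nu))\le P(T)\le C''\cW_2^2(\mu,\nu)$. For the first assertion I would substitute $\E\,\cW_2^2(\Lambda_s(\mu),\Lambda_s(\nu))\le P(s)\le C''\cW_2^2(\mu,\nu)$ back into the pointwise inequality and apply Gronwall once more to obtain $\rho(T;x,y)\le C(|x-y|^2+\cW_2^2(\mu,\nu))$.

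The main obstacle is the self-referential nature of the Wasserstein term: unlike for a fixed-coefficient SDE, the coefficients at the point $(x,y)$ depend on the whole family $\{X_s(\mu,\cdot),X_s(\nu,\cdot)\}$ through $\cW_2(\Lambda_s(\mu),\Lambda_s(\nu))$, so the pointwise estimate does not close on its own. The coupling construction, together with integrating against $\chi$ to pass to the scalar quantity $P(t)$, is precisely what decouples this dependence and reduces everything to a single scalar Gronwall inequality. Verifying that the pushforward of $\chi$ is an admissible coupling and that the integrand $|D_s(x',y')|^2$ is jointly measurable in $(x',y',\omega)$ (so that the coupling bound survives taking expectations and Fubini applies) is the only genuinely delicate point, and it is supplied by the joint continuity established in Theorem~\ref{the_well_posedness_of_sde_with_interaction}.
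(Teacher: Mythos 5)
Your proof is correct, and it is essentially the argument behind the result: the paper itself does not prove Proposition~\ref{pro_continuous_dependence_of_solution} but cites Theorem~2.14 of \cite{Gess_SMFE:2022}, whose proof rests on exactly your two ingredients --- the pathwise coupling bound $\cW_2^2(\Lambda_s(\mu),\Lambda_s(\nu))\le\int|X_s(\mu,x')-X_s(\nu,y')|^2\,\chi(dx',dy')$ obtained by pushing an optimal coupling $\chi$ through the two flows, followed by integrating the pointwise estimate against $\chi$ and closing a scalar Gronwall inequality. Your handling of the delicate points (finiteness of $P(t)$ via the moment bounds of Theorem~\ref{the_well_posedness_of_sde_with_interaction}, and joint measurability via the continuous version) is also sound.
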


\subsection{Measure-valued diffusion}
\label{sub:measure_valued_diffusion}

The goal of this section is to obtain an analog of Kolmogorov's equation for the process $\Lambda_{t}(\mu)$, $t\geq 0$, given in (\ref{equ_sde_with_interaction}). For this purpose, we need to recall the notion of Lions derivative according to~\cite{Cardaliaguet:2013}. We say that a function $f:\cP_2(\R^d ) \to \R^k$ is $L$-differentiable at $\mu$, if there exists an element $\D f(\mu)$ in $\L{\R^d}{\R^k \times \R^{d}}{\mu}$ such that 
\[
  \lim_{ \| h \|_{\mu}\to 0 } \frac{ f(\mu \circ (\id+h)^{-1})-f(\mu)-\langle \D f(\mu) , h \rangle_{\mu} }{ \|h\|_{\mu} }=0,
\]
where $\id$ denotes the identity map on $\R^d $ and the limit is taken over $h \in \L{\R^d}{\R^d}{\mu}$. In this case, $\D f(\mu)$ is called the $L$-derivative of $f$ at $\mu$. We write $f \in \Cf^1(\cP_2(\R^d ))$ if $f$ is $L$-differentiable at every point $\mu \in \cP_2(\R^d )$ and, for every $\mu$, the derivative has a $\mu$-version $\D f(\mu,x)$ such that $\D f(\mu,x)$ is jointly continuous in $(\mu,x) \in \cP_2(\R^d )\times \R^d$. The set of all functions $f \in \Cf^1(\cP_2(\R^d ))$ such that $f(x)$ and $\D f(\mu,x)$ are bounded in $(\mu,x)\in\cP_2(\R^d )\times \R^d $ will be denoted by $\Cf^1_b(\cP_2(\R^d ))$. Moreover, we define $\Cf^{1,1}_b(\cP_2(\R^d )\times \R^m )$ as the set of all functions from $\cP_2(\R^d )\times \R^m $ to $\R^k$ such that $f(\cdot ,x) \in \Cf^1(\cP_2(\R^d ))$, $f(\mu,\cdot) \in \Cf^1(\R^m )$ for all $x \in \R^m $, $\mu \in \cP_2(\R^d )$, and $f(\mu,x)$, $\nabla f(\mu,x)$, $\D f(\mu,x,y)$ are jointly continuous and bounded in $(\mu,x,y) \in \cP_2(\R^d )\times \R^m \times \R^d $. We iteratively define $\Cf^{l,l}_b (\cP_2(\R^d )\times \R^m )$ as a set of all functions $f:\cP_2(\R^d )\times \R^m \to \R^k $ such that $f,\nabla f \in \Cf^{l-1,l-1}_b(\cP_2(\R^d )\times \R^m )$ and $\D f \in \Cf^{l-1,l-1}_b(\cP_2(\R^d)\times \R^{m+d})$. In particular, $f \in \Cf^{2,2}_b(\cP(\R^d )\times \R^m )$ provided the function $f$, and all its derivatives up to the second order, i.e. $\nabla f$, $\D f$, $\nabla^2 f$, $\nabla \D f$, $\D\nabla f$, $\D^2f$, exist, are bounded and jointly continuous.
If $f:\cP_2(\R^d ) \to \R^k $ belongs to $\Cf^{l,l}_b(\cP_2(\R^d )\times \R^m )$, we will write $f \in \Cf^l_b(\cP_2(\R^d ))$.

Similarly to $\Cf^{l,l}_b(\cP_2(\R^d )\times \R^m )$, we define the class $\tilde\Cf^{l,l}_b(\cP_2(\R^d )\times \R^m )$ as the set of continuous and bounded functions $f:\cP_2(\R^d )\times \R^m \to \L{\Theta}{\R^k}{\m}$ such that for $\m$-a.e. $\theta\in\Theta$ we have $f(\cdot ,\cdot ,\theta) \in \Cf^{l,l}_b(\cP_2(\R^d )\times \R^m )$ and all its derivatives up to the $l$-th order are continuous and bounded as $\L{\Theta}{\R^k}{\m}$-valued functions.

\begin{example}
  \label{exa_derivative_of_cylindrical_function}
  If $\varphi_i \in \Cf_b^1(\R^d )$, $i \in [n]$, and $h \in \Cf^1(\R^n )$ then the function $f(\mu)=h(\langle \varphi_1 ,\mu  \rangle,\dots,\langle \varphi_n ,\mu  \rangle)$, $\mu \in \cP_2(\R^d )$, belongs to $\Cf^1_b(\cP_2(\R^d ))$ and 
  \[
    \D f(\mu,x)=\sum_{ i=1 }^{ n } \partial_i h(\langle \varphi_1 ,\mu  \rangle,\dots,\langle \varphi_n ,\mu  \rangle)\nabla \varphi_i(x), \quad x \in \R^d ,\ \ \mu \in \cP_2(\R^d ).
  \]
\end{example}

For the coefficients $B$ and $G$ of the equation~\eqref{equ_sde_with_interaction},  we define the following second-order differential operator 
\begin{equation}
  \label{equ_second_order_differential_operator}
  \begin{split} 
    \cL_tf(\mu):&= \frac{1}{ 2 }\int_{ \R^d  }   \int_{ \R^d  }   \tilde A(t,\mu,x,y):\D^2f(\mu,x,y)\mu(dx)\mu(dy) \\
    &+ \frac{1}{ 2 }\int_{ \R^d  }    A(t,\mu,x):\nabla \D f(\mu,x)\mu(dx)\\
    &+ \int_{ \R^d  }   B(t,\mu,x)\cdot \D f(\mu,x)\mu(dx),
  \end{split}
\end{equation}
for $f \in \Cf^{2}_b(\cP_2(\R^d ))$, where
\begin{align*}
\tilde{A}(t,\mu,x,y)&=\E_{ \m}\left[ G(t,\mu,x,\theta)\otimes G(t,\mu,y,\theta) \right]\\
&=\left( \langle G_i(t,\mu,x,\cdot) , G_j(t,\mu,y,\cdot) \rangle_{\m} \right)_{i,j \in [d]},\\
A(t,\mu,x)&=\tilde{A}(t,\mu,x,x)
\end{align*}
and we use the notation $C:D= \sum_{ i,j=1 }^{ d } c_{i,j}d_{i,j}$
for 
$C=(c_{i,j})_{i,j \in [d]}, D=(d_{i,j})_{i,j \in [d]}
$ and $a \cdot b=\sum_{ i=1 }^{ d } a_ib_i$ for $a=(a_i)_{i \in [d]}$, $b=(b_i)_{i \in [d]}$.

Let us introduce some additional notation. We will write that a function $f:[0,T]\times \cP_2(\R^d )\times \R^m \to \R^k $ belongs to $\Cf^{0,1,1}_b([0,T]\times \cP_2(\R^d )\times \R^m )$ if, for every $t \in [0,T]$, $f(t,\cdot ,\cdot ) \in \Cf^{1,1}_b(\cP_2(\R^d )\times \R^m )$ and the maps $f$, $\nabla f(t,\mu,x)$, $\D f(t,\mu,x,y)$ are jointly continuous and bounded in $(t,\mu,x,y)$. Iteratively, we write $f \in \Cf^{0,l,l}_b([0,T]\times \cP_2(\R^d )\times \R^m )$ provided that $f,\nabla f \in \Cf^{0,l-1,l-1}_b([0,T]\times \cP_2(\R^d )\times \R^m )$ and $\D f \in \Cf^{0,l-1,l-1}_b([0,T]\times \cP_2(\R^d )\times \R^{m+d} )$. The set of all functions $f:[0,T]\times \cP_2(\R^d) \to \R^k $ such that $f \in \Cf^{0,2,2}_b([0,T]\times \cP_2(\R^d )\times \R^m )$ will be denoted by $\Cf^{0,2}_b([0,T]\times \cP_2(\R^d ))$.

Similarly to $\Cf^{0,l,l}_b([0,T]\times \cP_2(\R^d )\times \R^m )$, we also define the class $\tilde\Cf^{0,l,l}_b([0,T]\times \cP_2(\R^d )\times \R^m )$ as the set of continuous and bounded functions $f:[0,T] \times \cP_2(\R^d )\times \R^m \to \L{\Theta}{\R^k}{\m}$ such that for $\m$-a.e. $\theta\in\Theta$ we have $f(\cdot ,\cdot ,\cdot ,\theta) \in \Cf^{0,l,l}_b([0,T] \times \cP_2(\R^d )\times \R^m )$ and all  corresponding derivatives are continuous and bounded as $\L{\Theta}{\R^k}{\m}$-valued functions.

We next provide the well posedness of the Kolmogorov equation associated to~\eqref{equ_sde_with_interaction}. This result can be obtained as in the proof of Theorem~3.1 in~\cite{Wang_Feng:2021} with slight changes, where a similar equation driven by a finite dimensional noise was considered.

\begin{proposition}[Kolmogorov equation]
  \label{pro_kolmogorov_equation}
  Let $T>0$ and the coefficients $B$, $G$ of~\eqref{equ_sde_with_interaction} belong to $\Cf^{0,2,2}_b([0,T]\times \cP_2(\R^d )\times \R^d )$ and $\tilde\Cf^{0,2,2}_b([0,T] \times \cP_2(\R^d )\times \R^d )$, respectively. For $\mu \in \cP_2(\R^d)$, let $\Lambda_t(\mu)$, $t \in [0,T]$, be a solution to \eqref{equ_sde_with_interaction} with initial mass distribution $\Lambda_0(\mu)=\mu$. Then, for every $\Phi \in \Cf_b^{2}(\cP_2(\R^d ))$, the function 
  \[
    U(t,\mu)=\E\Phi(\Lambda_{t}(\mu)), \quad (t,\mu) \in [0,T]\times \cP_2(\R^d ),
  \]
  is a unique solution to the equation 
  \begin{equation} 
  \label{equ_kolmogorov_equation}
    \begin{split} 
      &\partial_t U(t,\mu)=\cL_tU(t,\mu),\\
      &U(0,\mu)=\Phi(\mu), \quad (t,\mu)\in [0,T] \times \cP_2(\R^d ),
    \end{split}
  \end{equation}
  in the class $\Cf_b^{0,2}([0,T]\times \cP_2(\R^d ))$ with $\partial_tU \in \Cf([0,T]\times \cP_2(\R^d ))$.

  If, additionally, $B \in \Cf^{0,l,l}_b([0,T]\times \cP_2(\R^d )\times \R^d )$, $G \in \tilde\Cf^{0,l,l}_b([0,T] \times \cP_2(\R^d )\times \R^d )$ and $\Phi \in \Cf_b^{l}(\cP_2(\R^d ))$, for some $l>2$, then $U \in \Cf_b^{0,l}([0,T] \times \cP_2(\R^d ))$.
\end{proposition}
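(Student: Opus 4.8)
The plan is to follow the Feynman--Kac strategy on Wasserstein space used in the proof of Theorem~3.1 of~\cite{Wang_Feng:2021}: upgrade the regularity of the stochastic flow $\mu\mapsto\Lambda_t(\mu)$ from the continuous dependence of Proposition~\ref{pro_continuous_dependence_of_solution} to full $\Cf^2$-regularity in the Lions sense, transfer this regularity to $U(t,\mu)=\E\Phi(\Lambda_t(\mu))$, and then identify $\partial_tU$ with $\cL_tU$ by combining an Itô formula on $\cP_2(\R^d)$ with the flow property. Throughout, the boundedness of $B$, $G$ and of their first two derivatives (the hypotheses $B\in\Cf^{0,2,2}_b$, $G\in\tilde\Cf^{0,2,2}_b$) is what keeps every estimate uniform on $[0,T]$.

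First I would establish the differentiability of the flow. Formally differentiating~\eqref{equ_sde_with_interaction} in the spatial variable $x$ and, in the sense of Lions, in the initial measure $\mu$ produces a coupled system of \emph{linear} SDEs with interaction for the derivative processes $\nabla_x X_t(\mu,x)$, $\D X_t(\mu,x)(\cdot)$ and their second-order analogues; the coefficients of this system are assembled from $\nabla B$, $\D B$, $\nabla G$, $\D G$, all bounded by assumption. Applying the Burkholder--Davis--Gundy inequality to the cylindrical stochastic integral together with Gronwall's lemma yields moment bounds on these processes that are uniform in $(t,\mu,x)$ over $[0,T]$, exactly as the moment bounds in Theorem~\ref{the_well_posedness_of_sde_with_interaction} were obtained. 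One then checks, by controlling the relevant difference quotients with the same BDG--Gronwall scheme (as in Proposition~\ref{pro_continuous_dependence_of_solution}), that these candidate processes are genuinely the derivatives of $X_t(\mu,x)$. Combining this with the chain rule for the Lions derivative and $\Phi\in\Cf^2_b(\cP_2(\R^d))$, and differentiating under the expectation sign, shows that $U(t,\cdot)\in\Cf^2_b(\cP_2(\R^d))$ with $\D U$, $\nabla\D U$, $\D^2U$ bounded uniformly; joint continuity in $(t,\mu)$ follows from dominated convergence and the continuous-dependence estimates of Proposition~\ref{pro_continuous_dependence_of_solution}.

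With $U(t,\cdot)$ now an admissible test function, I would apply the Itô formula on $\cP_2(\R^d)$ to $s\mapsto U(t,\Lambda_s(\mu))$; using that $\Lambda$ solves~\eqref{equ_sde_with_interaction} and that the quadratic variation of $\int_\Theta G\,W(d\theta,ds)$ reproduces precisely the covariance kernels $\tilde A$ and $A$ in~\eqref{equ_second_order_differential_operator}, this gives
\[
\E\,U(t,\Lambda_h(\mu))-U(t,\mu)=\int_0^h\E\left[\cL_sU(t,\Lambda_s(\mu))\right]ds .
\]
Together with the flow property $\Lambda_{t+h}(\mu)=\Lambda_t(\Lambda_h(\mu))$, which yields $U(t+h,\mu)=\E\,U(t,\Lambda_h(\mu))$, dividing by $h$ and letting $h\downarrow0$ gives both the existence and continuity of $\partial_tU$ and the identity $\partial_tU(t,\mu)=\cL_tU(t,\mu)$, with $U(0,\cdot)=\Phi$ by construction. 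For uniqueness, if $\tilde U\in\Cf^{0,2}_b$ is any solution of~\eqref{equ_kolmogorov_equation}, the same Itô formula applied to $s\mapsto\tilde U(t-s,\Lambda_s(\mu))$ on $[0,t]$ makes its drift vanish by virtue of the equation, so this process is a martingale and $\tilde U(t,\mu)=\E\,\tilde U(0,\Lambda_t(\mu))=\E\,\Phi(\Lambda_t(\mu))=U(t,\mu)$. Finally, the higher-order statement follows by bootstrapping: under $B\in\Cf^{0,l,l}_b$, $G\in\tilde\Cf^{0,l,l}_b$ and $\Phi\in\Cf^l_b$ one differentiates the linear derivative SDEs up to order $l$, obtains uniform moment bounds for all derivative processes by the same BDG--Gronwall argument, and concludes $U\in\Cf^{0,l}_b([0,T]\times\cP_2(\R^d))$.

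The main obstacle I anticipate is the first step: proving uniform-in-time Lions-differentiability of the measure-valued flow and the accompanying moment estimates for the derivative processes. The difficulties are that the noise is the infinite-dimensional cylindrical process $W$ on $\L{\Theta}{\R}{\m}$, so every stochastic-integral estimate must be carried out in the Hilbert--Schmidt norm via BDG for cylindrical integrals, and that the interaction through $\Lambda_t=\mu\circ X_t^{-1}$ couples the equation for $X_t(\mu,x)$ to the whole family $\{X_t(\mu,y)\}_y$, so that $\D X_t$ solves an equation itself involving an integration against $\mu$. Verifying that the formal derivative processes are the true Lions derivatives, rather than merely candidates, is the delicate point, and is where the continuous-dependence bounds of Proposition~\ref{pro_continuous_dependence_of_solution} and a careful difference-quotient analysis are essential.
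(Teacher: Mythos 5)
Your proposal takes essentially the same route as the paper: the paper's proof of Proposition~\ref{pro_kolmogorov_equation} consists of deferring to the proof of Theorem~3.1 in~\cite{Wang_Feng:2021} with the remark that only slight changes are needed to pass from finite-dimensional to cylindrical noise, and your sketch (Lions-differentiability of the flow with BDG--Gronwall moment bounds, the It\^o formula on $\cP_2(\R^d)$ combined with the flow property, a martingale argument for uniqueness, and bootstrapping for the $\Cf^{0,l}_b$ statement) is precisely a reconstruction of that argument adapted to the cylindrical setting, with the main technical obstacle correctly identified. One caveat worth recording: the flow identity $\Lambda_{t+h}(\mu)=\Lambda_t(\Lambda_h(\mu))$ and the resulting identification $\partial_t U=\cL_t U$ are written as if the coefficients were time-homogeneous; for genuinely time-dependent $B,G$ one must work with the two-parameter flow $\Lambda_{s,t}$ (otherwise the forward equation produces $\E[\cL_t\Phi(\Lambda_t(\mu))]$ rather than $\cL_tU(t,\mu)$), but this subtlety is inherited from the statement itself and is immaterial where the proposition is applied in the paper, since the coefficients of~\eqref{equ_modified_sde_general} do not depend on time.
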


\section{Diffusion approximation via stochastic modified flows}
\label{sub:uniform_in_time_diffusion_approximation}

The goal of this section is to prove the theorems stated in the introduction. For this, we first show a general result comparing the dynamics of a Markov chain defined below with a corresponding SDE with interaction and cylindrical noise. Then, we show that the results given in the introduction immediately follow from the general comparison statement. We fix measurable functions $V:\cP_2(\R^d )\times \R^d \to \R^d $ and $G: \cP_2(\R^d ) \times \R^d \to \L{\Theta}{\R^d}{\m}$ such that $\E_{\m}G(\mu,x,\theta)=0$ for all $(\mu,x) \in \cP_2(\R^d) \times \R^d$. For $\eta>0$ and $\mu \in \cP_2(\R^d )$, we consider a Markov chain defined by 
\begin{equation} 
  \label{equ_sgd_general_form}
  \begin{split} 
    Z_{n+1}^\eta(z)&= Z_n^\eta(z)+\eta V(\Gamma_n^\eta,Z_n^\eta(z))+\eta G(\Gamma_n^\eta,Z_n^\eta(z),\theta_n),\\
    Z_0^\eta(z)&= z, \quad \Gamma_n^\eta=\mu\circ (Z_n^\eta)^{-1}, \quad z \in \R^d ,\ \ n \in \N_0,
  \end{split}
\end{equation}
where $\theta_n$, $n \in \N_0$, are i.i.d. sampled from the distribution $\m$. We remark that, e.g., the SGD dynamics in the overparameterized shallow neural network in~\eqref{equ_sgd_overpar} can be written in form of~\eqref{equ_sgd_general_form} by taking $\mu= \frac{1}{ M }\sum_{ i=1 }^{ M } \delta_{Z^i_0}$ and $Z^{i,\eta}_n=Z_n^\eta(Z^{i,\eta}_0)$, $i \in [M]$.
We will approximate $\Gamma_n^\eta$, $n\in\N_0$, by solutions to the DDSMF
\begin{equation} 
  \begin{split} \label{equ_modified_sde_general}
    dX_t^\eta(x)&= \left[V(\Lambda_t^\eta,X_t^\eta(x))- \frac{\eta}{ 4 }\nabla |V(\Lambda_t^\eta,X_t^\eta(x))|^2- \frac{\eta}{ 4 }\left\langle\D |V(\Lambda_t^\eta,X_t^\eta(x))|^2,\Lambda_t^\eta\right\rangle\right] dt\\
    &+\sqrt{ \eta }\int_{ \Theta }   G(\Lambda_t^\eta,X_t^\eta(x))W(d \theta,dt), \\
    X_0^\eta(x)&= x, \quad \Lambda_t^\eta=\mu\circ (X_t^\eta)^{-1}, \quad x \in \R^d ,\ \ t\geq 0,
  \end{split}
\end{equation}
where $W$ is a cylindrical Wiener process on $\L{\Theta}{\R}{\m}$. We first prove some auxiliary statements that will imply the well-posedness of the DDSMF~\eqref{equ_modified_sde_general}.

\begin{lemma} 
  \label{lem_property_of_derivative}
  Let $\eps>0$ and $\xi_{s}$, $s \in [0,\eps]$, be a family of square integrable random variables on $\R^k$ defined on a probability space $(\Omega,\F,\p)$. If 
  \[
    \xi_0':=\lim_{ s \to  0+ }\frac{ \xi_s-\xi_0 }{ s }
  \]
  exists in $\L{\Omega}{\R^k}{\p}$, then for every $f \in \Cf^1(\cP_2(\R^k))$ one has 
  \[
    \lim_{ s \to 0+ }\frac{ f(\law (\xi_s))-f(\law (\xi_0)) }{ s }=\E \left[ \D f(\law (\xi_0),\xi_0) \cdot \xi_0' \right].
  \]
\end{lemma}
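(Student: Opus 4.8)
The plan is to reduce the statement to the Fréchet differentiability of the lift $\tilde f(\zeta):=f(\law\zeta)$, viewed as a functional on $\L{\Omega}{\R^k}{\p}$, and then to apply the chain rule to the curve $s\mapsto\xi_s$, which is differentiable from the right at $s=0$ by hypothesis.

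First I would recall the standard relation between the $L$-derivative and the lift (see \cite{Cardaliaguet:2013}): for $f\in\Cf^1(\cP_2(\R^k))$ the functional $\tilde f$ is Fréchet differentiable on $\L{\Omega}{\R^k}{\p}$, and for all $\zeta,\zeta'\in\L{\Omega}{\R^k}{\p}$ one has
\[
f(\law\zeta')-f(\law\zeta)=\E\left[\D f(\law\zeta,\zeta)\cdot(\zeta'-\zeta)\right]+o\!\left(\|\zeta'-\zeta\|_{\L{\Omega}{\R^k}{\p}}\right),\qquad \|\zeta'-\zeta\|_{\L{\Omega}{\R^k}{\p}}\to0,
\]
where $\D f(\law\zeta,\cdot)\in\L{\R^k}{\R^k}{\law\zeta}$, so that $\D f(\law\zeta,\zeta)\in\L{\Omega}{\R^k}{\p}$. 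Applying this with $\zeta=\xi_0$ and $\zeta'=\xi_s$ yields the expansion $f(\law\xi_s)-f(\law\xi_0)=\E[\D f(\law\xi_0,\xi_0)\cdot(\xi_s-\xi_0)]+o(\|\xi_s-\xi_0\|_{\L{\Omega}{\R^k}{\p}})$.

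Next I would pass to the limit $s\to0+$. Since $\xi_0'=\lim_{s\to0+}(\xi_s-\xi_0)/s$ exists in $\L{\Omega}{\R^k}{\p}$, the difference quotients $(\xi_s-\xi_0)/s$ are bounded in $L_2$ for small $s$; hence $\|\xi_s-\xi_0\|_{\L{\Omega}{\R^k}{\p}}\to0$ and $\|\xi_s-\xi_0\|_{\L{\Omega}{\R^k}{\p}}/s$ stays bounded. Dividing the expansion by $s$, the remainder equals $o(\|\xi_s-\xi_0\|)/s\to0$, whereas for the linear term the Cauchy--Schwarz inequality together with $\D f(\law\xi_0,\xi_0)\in\L{\Omega}{\R^k}{\p}$ and the convergence $(\xi_s-\xi_0)/s\to\xi_0'$ in $L_2$ gives $\E[\D f(\law\xi_0,\xi_0)\cdot\frac{\xi_s-\xi_0}{s}]\to\E[\D f(\law\xi_0,\xi_0)\cdot\xi_0']$, which is the asserted formula.

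The delicate step is the first one: the increment $\xi_s-\xi_0$ is an arbitrary element of $\L{\Omega}{\R^k}{\p}$ and need not be $\sigma(\xi_0)$-measurable, so the defining relation of the $L$-derivative, which only tests perturbations of the form $\law\xi_0\circ(\id+h)^{-1}=\law(\xi_0+h(\xi_0))$, does not apply verbatim. Upgrading from these functional perturbations to arbitrary $L_2$-directions is precisely the content of the Fréchet differentiability of the lift and is where the joint continuity of $(\mu,x)\mapsto\D f(\mu,x)$ built into $\Cf^1(\cP_2(\R^k))$ enters. I would either invoke this equivalence from \cite{Cardaliaguet:2013} directly, or argue along the segment $\zeta_u:=\xi_0+u(\xi_s-\xi_0)$, $u\in[0,1]$, writing $f(\law\xi_s)-f(\law\xi_0)=\int_0^1\E[\D f(\law\zeta_u,\zeta_u)\cdot(\xi_s-\xi_0)]\,du$ and using $\sup_{u}\cW_2(\law\zeta_u,\law\xi_0)\le\sup_{u}\|\zeta_u-\xi_0\|_{\L{\Omega}{\R^k}{\p}}\to0$ to replace $\D f(\law\zeta_u,\zeta_u)$ by $\D f(\law\xi_0,\xi_0)$ in the limit; the only care required there is the $L_2$-convergence $\D f(\law\zeta_u,\zeta_u)\to\D f(\law\xi_0,\xi_0)$, which follows from the continuity of $\D f$ and a uniform integrability argument.
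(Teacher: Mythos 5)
The limiting argument in your second paragraph is fine --- once one has the expansion $f(\law\xi_s)-f(\law\xi_0)=\E[\D f(\law\xi_0,\xi_0)\cdot(\xi_s-\xi_0)]+o(\|\xi_s-\xi_0\|_{\p})$, dividing by $s$, killing the remainder via $\|\xi_s-\xi_0\|_{\p}=O(s)$ and using Cauchy--Schwarz in the linear term is exactly right. You are also right to flag the first step as the delicate one; the problem is that neither of your two devices for closing it actually works, and that step carries the entire content of the lemma. The paper defines $\Cf^1(\cP_2(\R^k))$ through perturbations of the form $\mu\circ(\id+h)^{-1}$ with $h\in\L{\R^k}{\R^k}{\mu}$ (Wang's convention), whereas in \cite{Cardaliaguet:2013} the Lions derivative is \emph{defined} by Fr\'echet differentiability of the lift, and what is proved there is the structure theorem --- that the Fr\'echet derivative of a differentiable lift is a function of the random variable --- i.e.\ the converse of the implication you need. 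So ``invoking the equivalence from \cite{Cardaliaguet:2013}'' invokes a statement that is not in that source: the implication ``pushforward-differentiable with jointly continuous derivative $\Rightarrow$ the lift is Fr\'echet differentiable'' is a genuine theorem, essentially the lemma itself in disguise. Concretely, writing $\xi_s-\xi_0=\E[\xi_s-\xi_0\,|\,\xi_0]+\chi_s$ with $\E[\chi_s\,|\,\xi_0]=0$, the conditional part is of the form $h_s(\xi_0)$ and is covered by the paper's definition, but $\chi_s$ is in general of order $s$ (not $o(s)$) in $\L{\Omega}{\R^k}{\p}$, and showing that such directions contribute only $o(s)$ to $f(\law\xi_s)$ is precisely the difficulty; the pushforward definition says nothing about them.

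Your fallback argument does not repair this, because it is circular: the identity $f(\law\xi_s)-f(\law\xi_0)=\int_0^1\E\bigl[\D f(\law\zeta_u,\zeta_u)\cdot(\xi_s-\xi_0)\bigr]\,du$ already presupposes that $u\mapsto f(\law\zeta_u)$ is differentiable with precisely this derivative, and the curve $\zeta_u=\xi_0+u(\xi_s-\xi_0)$ has $L_2$-derivative $\xi_s-\xi_0$, which again fails to be $\sigma(\zeta_u)$-measurable; justifying that identity is therefore an instance of the lemma being proved, not an application of the definition. For comparison, the paper's own proof is a one-line citation of \cite[Lemma~2.4]{Wang_Feng:2021}, where the chain rule is established for the same notion of $L$-differentiability adopted here. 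As written, your proposal reduces the lemma to exactly that unproven (though true) fact; it would become a valid proof only by replacing the appeal to \cite{Cardaliaguet:2013} with a reference or argument that genuinely establishes the pushforward-to-lift implication --- and within this paper's bibliography that reference is \cite[Lemma~2.4]{Wang_Feng:2021} itself, at which point the remainder of your argument is subsumed by the citation.
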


\begin{proof}
This statement was obtained in~\cite[Lemma~2.4]{Wang_Feng:2021}.
\end{proof}

\begin{lemma}\label{lem_bdd_of_derivatives_implies_lipshitz_continuity}
  Let the functions $V$ and $G$ belong belong to $\Cf_b^{1,1}(\cP_2(\R^d)\times \R^d)$ and $ \tilde \Cf_b^{1,1}(\cP_2(\R^d)\times \R^d)$, respectively. Then, for every $x,y\in \R^d$ and $\mu,\nu\in\cP_2(\R^d)$, we have
  \begin{align*}
      |V(\mu,x)-V(\nu,y)|&+ \|G(\mu,x,\cdot )-G(\nu,y, \cdot )\|_{\m}\\
      &\leq L\left( \cW_2(\mu,\nu)+|x-y| \right),
  \end{align*}
  with
  \begin{align*}
  L&=\sup_{x,y\in\R^d,\mu\in\cP_2(\R^d)}\left( |\nabla V(\mu,x)|+|\D V(\mu,x,y)| \right)\\
  &+\sup_{x,y\in\R^d,\mu\in\cP_2(\R^d)} \left(\|\nabla G(\mu,x,\cdot)\|_{\m}+ \|\D G(\mu,x,y,\cdot)\|_{\m}\right).
  \end{align*}
\end{lemma}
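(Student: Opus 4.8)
The plan is to prove the two Lipschitz estimates by the standard ``bounded derivative implies Lipschitz'' argument, carried out separately in the spatial and in the measure variable and then combined by the triangle inequality. For the $V$-part I would write
\[
V(\mu,x)-V(\nu,y)=\bigl(V(\mu,x)-V(\mu,y)\bigr)+\bigl(V(\mu,y)-V(\nu,y)\bigr),
\]
and likewise for $G$. The spatial increment is immediate from the ordinary mean value inequality in $\R^d$: since $V(\mu,\cdot)\in\Cf^1(\R^d)$ with bounded gradient, $|V(\mu,x)-V(\mu,y)|\le \sup|\nabla V|\,|x-y|$, and analogously $\|G(\mu,x,\cdot)-G(\mu,y,\cdot)\|_{\m}\le \sup\|\nabla G\|_{\m}\,|x-y|$, viewing $G(\mu,\cdot,\cdot)$ as a $\Cf^1$ map into $\L{\Theta}{\R^d}{\m}$.

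For the measure increment I would interpolate along an optimal coupling. Fix $x$ and choose, on an auxiliary probability space, random variables $\Xi,\Upsilon$ with $\law(\Xi)=\mu$, $\law(\Upsilon)=\nu$ and $\E|\Xi-\Upsilon|^2=\cW_2^2(\mu,\nu)$, and set $\xi_s=(1-s)\Xi+s\Upsilon$ for $s\in[0,1]$, so that $\law(\xi_0)=\mu$, $\law(\xi_1)=\nu$, and $\xi_s$ is differentiable in $\L{\Omega}{\R^d}{\p}$ with constant derivative $\xi_s'=\Upsilon-\Xi$. Applying Lemma~\ref{lem_property_of_derivative} to $f=V(\cdot,x)$ (after shifting the parameter to an arbitrary base point $s$) shows that $g(s):=V(\law(\xi_s),x)$ has right-derivative
\[
g'(s)=\E\bigl[\D V(\law(\xi_s),x,\xi_s)\cdot(\Upsilon-\Xi)\bigr].
\]
Since $s\mapsto\law(\xi_s)$ is $\cW_2$-continuous, $\xi_s\to\xi_{s_0}$ pointwise, and $\D V$ is jointly continuous and bounded, dominated convergence gives that $g'$ is continuous; hence $g\in\Cf^1([0,1])$ and the fundamental theorem of calculus yields $V(\nu,x)-V(\mu,x)=\int_0^1 g'(s)\,ds$. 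Bounding $|\D V|$ by its supremum and using Cauchy--Schwarz then gives $|V(\nu,x)-V(\mu,x)|\le \sup|\D V|\,\E|\Upsilon-\Xi|\le \sup|\D V|\,\cW_2(\mu,\nu)$. Adding the spatial and measure estimates, and observing that the constant $L$ in the statement dominates each supremum appearing, produces the asserted bound for $V$.

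The $G$-part is formally identical, provided Lemma~\ref{lem_property_of_derivative} is read in its Hilbert-space-valued form: one may either apply the scalar lemma to the pairings $\mu\mapsto\langle G(\mu,x,\cdot),h\rangle_{\m}$ for $h$ ranging over a countable dense subset of the separable space $\L{\Theta}{\R^d}{\m}$ and reassemble, or note that the proof of the lemma goes through verbatim for maps valued in that Hilbert space. This reassembly, together with justifying the Bochner fundamental theorem of calculus and pulling the norm inside the integral via Minkowski's integral inequality to obtain $\|G(\nu,x,\cdot)-G(\mu,x,\cdot)\|_{\m}\le\sup\|\D G\|_{\m}\,\cW_2(\mu,\nu)$, is the only genuinely delicate point; the remaining manipulations are routine. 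Combining the $G$ spatial and measure estimates with those for $V$ gives exactly the claimed inequality with the stated $L$.
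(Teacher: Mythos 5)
Your proof is correct and follows essentially the same route as the paper: the same splitting into a spatial increment and a measure increment, the ordinary mean value inequality for the former, and linear interpolation along a coupling combined with Lemma~\ref{lem_property_of_derivative} for the latter. The only cosmetic differences are that you interpolate along an optimal coupling and invoke the fundamental theorem of calculus, whereas the paper interpolates along an arbitrary coupling, uses the mean-value bound, and takes the infimum over couplings at the end; you also spell out the Hilbert-space-valued adaptation for $G$ (pairing against a dense set or rerunning the lemma vector-valued), which the paper leaves as ``similarly''.
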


\begin{proof}
Let $x,y\in\R^d$ and $\mu,\nu\in\cP_2(\R^d)$ be fixed. We take an arbitrary probability measure $\chi$ on $\R^d\times\R^d$ with marginals $\mu$, $\nu$ and consider random variables $\zeta_0$, $\zeta_1$ on the probability space $(\R^{d}\times\R^d,\B(\R^{d}\times\R^d),\chi)$ defined by $\zeta_0(x,y)=y$ and $\zeta_1(x,y)=x$ for all $(x,y)\in\R^{d}\times\R^d$. Then $\law(\zeta_0)=\nu$ and $\law(\zeta_1)=\mu$. Set $\xi_s=(1-s)\zeta_0+s\zeta_1$, $s\in[0,1]$, and note that $\xi_i=\zeta_i$, for $i\in\{0,1\}$, and $\xi_s'=(\zeta_1-\zeta_0)$, for all $s\in[0,1]$. We have
\begin{align*}
    |V(\mu,x)-V(\nu,y)|\leq|V(\mu,x)-V(\mu,y)|+|V(\mu,y)-V(\nu,y)|
\end{align*}
and we can bound the terms on the right hand side of the inequality as follows.
With the mean-value theorem, the first term can be  bounded by $\sup_{z \in\R^d,\rho\in\cP_2(\R^d)}|\nabla V(\rho,z)|\,  |x-y|$. To bound the second term, we will use Lemma~\ref{lem_property_of_derivative} and the mean-value theorem:
\begin{align*}
  |V(\mu,y)-&V(\nu,y)|=|V(\law(\zeta_1),y)-V(\law(\zeta_0),y)|\\
    &\leq \sup_{s\in[0,1]}\left| \frac{d}{ds}V(\law(\xi_s),y)\right|\\
      &\leq \sup_{s\in[0,1]}\left| \int_{\R^d}\int_{R^d}\D V(\law(\xi_s),y,\xi_s(z_1,z_2))\cdot\xi_s'(z_1,z_2)\chi(dz_1,dz_2)\right|\\
	&\leq \sup_{z_1,z_2\in\R^d,\rho\in\cP_2(\R^d)}|\D V(\rho,z_1,z_2)|\int_{\R^d}\int_{R^d}|\zeta_1(z_1,z_2)-\zeta_0(z_1,z_2)|\chi(dz_1,dz_2)\\
	&\leq \sup_{z_1,z_2\in\R^d,\rho\in\cP_2(\R^d)}|\D V(\rho,z_1,z_2)|\left(\int_{\R^d}\int_{R^d}|z_2-z_1|^2\chi(dz_1,dz_2)\right)^{ \frac{1}{ 2 }}.
\end{align*}
Taking the infimum over all probability measures $\chi$ on $\R^d\times\R^d$ with marginals $\mu$, $\nu$, we obtain
\[
|V(\mu,y)-V(\nu,y)|\leq \sup_{z_1,z_2\in\R^d,\rho\in\cP_2(\R^d)}|\D V(\rho,z_1,z_2)|\cW_2(\mu,\nu).
\]

The estimate for $\|G(x,\mu)-G(y,\nu)\|_{\m}$ can be obtained similarly.
\end{proof}

Now we are ready to proof the main result of this work.

\begin{theorem} 
  \label{the_main_result}
  Let $V \in \Cf^{5,5}_b(\cP_2(\R^d )\times \R^d )$, $G \in \tilde{\Cf}^{4,4}_b(\cP_2(\R^d )\times \R^d )$ and $\E_{\m}G(\mu,x,\theta)=0$ for all $\mu\in\cP_2(\R^d)$, $x\in\R^d$. For $\mu \in \cP_2(\R^d)$ and $\eta>0$, let $\Gamma_n^\eta(\mu)$, $n \in \N_0$, and $\Lambda_t^\eta(\mu)$, $t\geq 0$, be  defined by \eqref{equ_sgd_general_form}, and \eqref{equ_modified_sde_general}, respectively. Then, for every $\Phi \in \Cf^{4}_b(\cP_2(\R^d ))$ and $T>0$ there exists a constant $C$ independent of $\eta$ such that 
  \begin{equation}\label{equ_the_main_estimate_in_general_case}
    \sup\limits_{ \mu \in \cP_2(\R^d ) }\sup\limits_{ n: n \eta\leq T }\left| \E\Phi(\Lambda_{n\eta}^\eta(\mu))-\E\Phi (\Gamma_n^\eta(\mu)) \right|\leq C\eta^2,
  \end{equation}
  for all $\eta>0$.
\end{theorem}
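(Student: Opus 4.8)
The plan is to work entirely at the level of the measure-valued processes. First I would observe that $\Gamma_n^\eta(\mu)$ is itself a Markov chain on $\cP_2(\R^d)$: writing $T_\theta^\nu=\id+\eta V(\nu,\cdot)+\eta G(\nu,\cdot,\theta)$, one has $\Gamma_{n+1}^\eta=\Gamma_n^\eta\circ (T_{\theta_n}^{\Gamma_n^\eta})^{-1}$, so the transition depends only on $\Gamma_n^\eta$ and $\theta_n\sim\m$; similarly $\Lambda_t^\eta(\mu)$ is a Markov process on $\cP_2(\R^d)$ with semigroup $\mathcal S_t^\eta\Phi(\mu)=\E\Phi(\Lambda_t^\eta(\mu))$. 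Before anything else I must check that the modified coefficients $B^\eta(\mu,x)=V(\mu,x)-\frac\eta4\nabla|V(\mu,x)|^2-\frac\eta4\langle\D|V(\mu,x)|^2,\mu\rangle$ and $\sqrt\eta\,G$ of \eqref{equ_modified_sde_general} fall under Theorem~\ref{the_well_posedness_of_sde_with_interaction} and Proposition~\ref{pro_kolmogorov_equation}: the hypotheses $V\in\Cf^{5,5}_b$, $G\in\tilde\Cf^{4,4}_b$ are exactly what is needed to place $B^\eta\in\Cf^{0,4,4}_b$ and $\sqrt\eta\,G\in\tilde\Cf^{0,4,4}_b$ (the extra spatial derivative lost to $\nabla|V|^2$ accounts for the gap between the ``$5$'' and the ``$4$''), so that $U(t,\mu):=\mathcal S_t^\eta\Phi(\mu)$ solves $\partial_tU=\cL^\eta U$ and lies in $\Cf^{0,4}_b([0,T]\times\cP_2(\R^d))$ by Proposition~\ref{pro_kolmogorov_equation} with $l=4$. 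A key point I would emphasize here is that, since $B^\eta\to V$ with all derivatives bounded uniformly for $\eta\in(0,1]$ and the diffusion carries a harmless $\sqrt\eta$, the bounds on $U$ and its Lions derivatives up to order four can be taken uniform in $\eta$; this uniformity is what ultimately makes the constant $C$ in \eqref{equ_the_main_estimate_in_general_case} independent of $\eta$.

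With $U$ in hand I would telescope. Using $U(0,\cdot)=\Phi$, $U(n\eta,\mu)=\E\Phi(\Lambda_{n\eta}^\eta(\mu))$ and the two Markov properties,
\[
\E\Phi(\Lambda_{n\eta}^\eta(\mu))-\E\Phi(\Gamma_n^\eta(\mu))=\sum_{k=0}^{n-1}\E\Big[U((n-k)\eta,\Gamma_k^\eta)-U((n-k-1)\eta,\Gamma_{k+1}^\eta)\Big],
\]
so that, writing $\tau=(n-k-1)\eta$ and conditioning on $\Gamma_k^\eta=\nu$, everything reduces to the one-step local error
\[
e_\eta(\tau,\nu):=U(\tau+\eta,\nu)-\E\big[U(\tau,\nu\circ (T_\theta^\nu)^{-1})\big],
\]
for which I would prove $|e_\eta(\tau,\nu)|\le C\eta^3$ with $C$ uniform in $\tau\in[0,T]$, $\nu\in\cP_2(\R^d)$ and $\eta\in(0,1]$. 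Summing the $n\le T/\eta$ such terms then yields the global $O(\eta^2)$ estimate, while the uniformity in $\nu$ delivers the supremum over $\mu$.

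For the local error I would expand the two contributions separately. On the continuous side I write $\cL^\eta=\cL^0+\eta\cL^1$, where $\cL^0f(\nu)=\int V\cdot\D f\,\nu$ is first order and $\eta\cL^1$ collects the $\sqrt\eta$-diffusion part (the $\tilde A:\D^2f$ and $A:\nabla\D f$ terms of \eqref{equ_second_order_differential_operator}) together with the modified-drift correction; using $\partial_tU=\cL^\eta U$ twice gives $U(\tau+\eta,\nu)=U(\tau,\nu)+\eta\cL^0U+\eta^2\cL^1U+\frac{\eta^2}{2}(\cL^0)^2U+O(\eta^3)$. On the discrete side, since $\nu\circ(T_\theta^\nu)^{-1}=\nu\circ(\id+h_\theta)^{-1}$ with displacement $h_\theta=\eta V(\nu,\cdot)+\eta G(\nu,\cdot,\theta)$, I would apply the second-order Lions--Taylor formula and take $\E_\m$, using $\E_\m G=0$ and $\E_\m[G(\nu,z,\theta)\otimes G(\nu,z',\theta)]=\tilde A(\nu,z,z')$ (the common sample $\theta_k$ producing exactly the two-point covariance $\tilde A$), to get
\[
\E\big[U(\tau,\nu\circ(T_\theta^\nu)^{-1})\big]=U(\tau,\nu)+\eta\cL^0U+\tfrac{\eta^2}{2}\!\int\!\nabla\D U:(VV^{\!\top}\!+A)\,\nu+\tfrac{\eta^2}{2}\!\iint\!\D^2U:(V\otimes V+\tilde A)\,\nu\nu+O(\eta^3).
\]
The $A:\nabla\D U$ and $\tilde A:\D^2U$ pieces match the diffusion part of $\eta^2\cL^1U$, so the whole matter reduces to the identity
\[
\tfrac12\!\int\!\nabla\D U:VV^{\!\top}\nu+\tfrac12\!\iint\!\D^2U:V\otimes V\,\nu\nu=\tfrac12(\cL^0)^2U-\tfrac14\!\int\!\big(\nabla|V|^2+\langle\D|V|^2,\nu\rangle\big)\cdot\D U\,\nu,
\]
which is precisely where the modified drift earns its keep: expanding $(\cL^0)^2U=\cL^0(\int V\cdot\D U\,\nu)$ with Lemma~\ref{lem_property_of_derivative} produces, besides the $\nabla\D U:VV^{\!\top}$ and $\D^2U:V\otimes V$ terms, exactly the first-order terms $\int(\nabla V)^{\!\top}V\cdot\D U$ and $\int\langle(\D V)^{\!\top}V,\nu\rangle\cdot\D U$, i.e. $\tfrac12\nabla|V|^2$ and $\tfrac12\langle\D|V|^2,\nu\rangle$ paired with $\D U$, and these are annihilated by the correction terms built into \eqref{equ_modified_sde_general}. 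Verifying this cancellation, carefully tracking the Lions calculus and the symmetry of $\D^2U$, is the computational heart of the argument.

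The last point, and the one I expect to be most delicate, is the $O(\eta^3)$ remainder with only four orders of regularity available for $U$. Naively the time-Taylor remainder involves $(\cL^\eta)^3U$, i.e. six derivatives, which we do not have. I would avoid this by writing the remainder in integrated form $\int_0^\eta\!\int_0^s(\mathcal S_r^\eta-\mathrm{Id})(\cL^\eta)^2U(\tau,\cdot)\,dr\,ds$ and estimating $(\mathcal S_r^\eta-\mathrm{Id})\psi$ by the Lipschitz bound $\|\D\psi\|_\infty\,\E\cW_2(\Lambda_r^\eta(\nu),\nu)$ from Lemma~\ref{lem_bdd_of_derivatives_implies_lipshitz_continuity} and Proposition~\ref{pro_continuous_dependence_of_solution}, together with the displacement estimate $\E\cW_2(\Lambda_r^\eta(\nu),\nu)\lesssim r+\sqrt{\eta r}$ following from Theorem~\ref{the_well_posedness_of_sde_with_interaction}; splitting $\psi=(\cL^\eta)^2U$ into the part whose single Lions derivative costs at most four orders of $U$ (treated by this Lipschitz bound, giving $\int_0^\eta\!\int_0^s(r+\sqrt{\eta r})\,dr\,ds=O(\eta^3)$) and the residual $O(\eta^2)$ part (treated by a crude sup bound, giving $O(\eta^4)$) keeps everything within the available regularity. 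The analogous remainder in the discrete Lions--Taylor expansion is controlled by the $\Cf^4$-bounds on $U$ and the finite moments of $h_\theta$. Assembling $|e_\eta(\tau,\nu)|\le C\eta^3$ uniformly and summing over the $\le T/\eta$ steps gives \eqref{equ_the_main_estimate_in_general_case}; the main obstacle throughout is to keep every constant uniform in $\eta$ and in the measure argument $\nu$, which is exactly what the uniform-in-$\eta$ bounds on $U$ from Proposition~\ref{pro_kolmogorov_equation} and the measure-Lipschitz estimates provide.
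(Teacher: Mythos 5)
Your proposal is correct and follows essentially the same route as the paper's proof: the identical semigroup telescoping (your conditioning on $\Gamma_k^\eta$ is exactly the paper's bound $\sup_\mu|\cS\Psi(\mu)|\le\sup_\mu|\Psi(\mu)|$ applied to the one-step errors), the same two expansions of the local error (a second-order Lions--Taylor expansion of the discrete transition operator using $\E_\m G=0$, and the Kolmogorov equation from Proposition~\ref{pro_kolmogorov_equation} with $l=4$ for the continuous side), and the same cancellation identity in which the modified drift terms $-\frac{\eta}{4}\nabla|V|^2-\frac{\eta}{4}\langle\D|V|^2,\cdot\rangle$ absorb the first-order terms produced by $\frac12\cL_1^2U$. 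The only point where you genuinely deviate is the control of the $O(\eta^3)$ time-Taylor remainder within four Lions derivatives: the paper iterates the integral form of the Kolmogorov equation (following Lemma~3 of \cite{Lei:2022}), exploiting that $\cL=\cL_1+\eta\cL_2$ with $\cL_1$ first order, so the only four-derivative terms carry extra powers of $\eta$; you instead write the remainder as $\int_0^\eta\!\int_0^s(\cS_r^\eta-\mathrm{Id})\,\cL^2U(\tau,\cdot)\,dr\,ds$ and estimate it via the Wasserstein--Lipschitz bound of Lemma~\ref{lem_bdd_of_derivatives_implies_lipshitz_continuity} together with the displacement estimate $\E\cW_2(\Lambda_r^\eta(\nu),\nu)\lesssim r+\sqrt{\eta r}$, splitting off the $\eta^2\cL_2^2U$ piece that would cost a fifth derivative --- this works, at the mild extra cost of justifying the commutation $\cL^2 U(\tau+r,\cdot)=\cS_r^\eta\,\cL^2U(\tau,\cdot)$. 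Your explicit insistence that the $\Cf^{0,4}_b$ bounds on $U$ be uniform in $\eta\in(0,T]$ is a point the paper uses only implicitly, and is worth keeping.
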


\begin{proof} 
We first remark that the measure-valued process $\Lambda_t^\eta(\mu)$, $t\geq 0$, is uniquely defined due to Theorem~\ref{the_well_posedness_of_sde_with_interaction} and Lemma~\ref{lem_bdd_of_derivatives_implies_lipshitz_continuity}. Without loss of generality, we consider $\eta\leq T$. The proof of this theorem relies on the comparison of the generators associated with the processes $\Gamma_n^\eta(\mu)$, $n \in \N_0$, and $\Lambda_t^\eta(\mu)$, $t \ge 0$, up to a certain order of $\eta$. We first demonstrate how such a bound on their difference can be used to conclude the proof.

We start from the definition of the transition semigroup for the process $\Gamma_n^\eta(\mu)$, $n \in \N_0$. For convenience of notation, we will drop the superscript $\eta$ in $\Gamma^\eta_n$ and $\Lambda^\eta_t$ and simply write $\Gamma_n$ and $\Lambda_t$, respectively.  Note that $\Gamma_{n+1}=\Gamma_n\circ Y_n^{-1}(\Gamma_n,\cdot)$, where \[
  Y_n(\mu,y)=y+\eta V(\mu,y)+\eta G(\mu,y,\theta_n), \quad \mu \in \cP_2(\R^d ), \ \ y \in \R^d.
\]
Indeed, by~\eqref{equ_sgd_general_form}, $Z_{n+1}(z)=Y_n(\Gamma_n,Z_n(z))$, $z \in \R^d $, and, hence, 
\begin{align*}
  \Gamma_n\circ Y_n^{-1}(\Gamma_n,\cdot )&= \mu\circ Z_n^{-1}\circ Y_n^{-1}(\Gamma_n,\cdot )\\
  &= \mu\circ Y_n(\Gamma_n,Z_n(\cdot ))^{-1} =\mu\circ Z_{n+1}^{-1}=\Gamma_{n+1},
\end{align*}
for all $n \in \N_0$. Therefore, defining the linear operator $\cS$ on the set of all bounded measurable functions $\Psi:\cP_2(\R^d ) \to \R $ by
\[
  \cS \Psi(\mu)=\E_{\m}\Psi(\mu\circ Y_1^{-1}(\mu,\cdot)), \quad \mu \in \cP_2(\mu),
\]
we conclude that 
\begin{equation} 
  \label{equ_formula_for_psi_gamma}
  \begin{split}
    \E_{\m}\Psi (\Gamma_n(\mu))&= \E_{\m}\Psi (\Gamma_{n-1}(\mu)\circ Y^{-1}_{n-1}(\Gamma_{n-1}(\mu),\cdot))\\
    &=\E_{\m}\left[ \E_{\m} \left[ \Psi (\Gamma_{n-1}(\mu)\circ Y^{-1}_{n-1}(\Gamma_{n-1}(\mu),\cdot))\Big|  \Gamma_{n-1}(\mu) \right] \right]\\
    &= \E_{\m}\cS\Psi(\Gamma_{n-1}(\mu))=\dots=\cS^n\Psi(\mu),
  \end{split}
\end{equation}
for all $n \in \N$. Hence, defining $U(t,\mu)=\E\Phi(\Lambda_{t}(\mu))$, $t\geq 0$, $\mu \in \cP_2(\R^d )$, and using~\eqref{equ_formula_for_psi_gamma}, we get for each $\mu \in \cP_2(\R^d )$ and $n \in \N$
  \begin{equation}\label{equ_comparison_of_semigroups}
  \begin{split}
  \E\Phi(\Gamma_n(\mu)))-\E\Phi(\Lambda_{n\eta}(\mu))&=\cS^n\Phi(\mu)-U(t_n,\mu)\\
  &=\sum_{ i=0 }^{ n-1 } \cS^{n-i-1}\left( \cS U(t_{i},\mu)-U(t_{i+1},\mu) \right),
  \end{split}
  \end{equation}
  where $t_i:=i\eta$. 
  
  Thus, by~\eqref{equ_comparison_of_semigroups}, and by the inequality
  \[
    \sup_{\mu \in \cP_2(\R^d )}|\cS\Psi(\mu)|\leq \sup_{ \mu \in \cP_2(\R^d ) }|\Psi(\mu)|,
  \]
 we deduce that there exists a constant $C>0$, such that for all $n \in \N$ with $n\eta \leq T$
  \begin{align}\label{equ_comparison_of_semigroups_2}
  \sup\limits_{\mu \in \cP_2(\R^d )}|\E\Phi(\Lambda_{n\eta}(\mu))-\E\Phi(\Gamma_n(\mu))|&\leq \sup\limits_{ \mu \in \cP_2(\R^d ) }\sum_{ i=0 }^{ n-1 } |\cS U(t_{i},\mu)-U(t_{i+1},\mu)|.
  \end{align}
  In conclusion, to prove~\eqref{equ_the_main_estimate_in_general_case}, it remains to compare $\cS U(t_i,\mu)$ with $U(t_{i+1},\mu)$. For this, we will expand the generators associated with the processes $\Gamma_n^\eta(\mu)$, $n \in \N_0$, and $\Lambda_t^\eta(\mu)$, $t\geq 0$, with respect to $\eta$ up to the second order.

To obtain the expansion of $\cS\Psi(\mu)$ for $\Psi\in\Cf^{3}_b(\cP_2(\R^d))$, we  fix $\mu \in \cP_2(\R^d )$, $\theta \in \Theta$ and consider $Y(\mu,y)=y+\eta V(\mu,y)+\eta G(\mu,y,\theta)$, $y \in \R^d $, as a random variable on the probability space $(\R^d,\B(\R^d ),\mu)$. Define 
  \[
    \xi_s(y)=(1-s)y+ s Y(\mu, y), \quad y \in \R^d ,\ \  s \in [0,1].
  \]
  Then $\xi_0(y)=y$, $\xi_1(y)=Y(\mu,y)$, $\xi'_s(y)=\eta (V(\mu,y)+G(\mu,y,\theta))$  and $\law (\xi_s):=\mu\circ \xi_s^{-1}$ for all $y \in \R^d $, $s \in [0,1]$.   Using Taylor's formula, we obtain
  \begin{equation}\label{equ_expansion_of_Psi}
  \begin{split}
    \Psi(\mu\circ Y^{-1}(\mu,\cdot))&= \Psi(\law(\xi_1))=\Psi(\law(\xi_0))+ \frac{d}{ ds }\Psi(\law(\xi_s))\big|_{s=0}\\
    &+ \frac{1}{ 2 }\frac{ d^2 }{ ds^2 }\Psi(\law(\xi_s))\big|_{s=0}+  \frac{1}{ 2 }\int_{ 0 }^{ 1 } \frac{ d^3 }{ ds^3 }\Psi(\law(\xi_s))(1-s)^3ds.
    \end{split}
  \end{equation}
  We next compute the derivatives appearing in the expression above. By Lemma~\ref{lem_property_of_derivative}, we get 
  \begin{align*}
    \frac{d}{ ds }\Psi(\law(\xi_s))= \eta \int_{ \R^d  }   \D\Psi(\law(\xi_s),\xi_s(x)) \cdot (V(\mu,x)+G(\mu,x,\theta))\mu(dx)
  \end{align*}
  and 
  \begin{align*}
    \frac{ d^2 }{ ds^2 }\Psi(\law(\xi_s))&=  \eta\frac{d}{ ds }\int_{ \R^d  }   \D\Psi(\law(\xi_s),\xi_s(x)) \cdot (V(\mu,x)+G(\mu,x,\theta))\mu(dx)=\\
    &= \eta^2 \int_{ \R^d  }   \int_{ \R^d  }   \D^2\Psi(\law(\xi_s),\xi_s(x),\xi_s(y))\\
    &\qquad\qquad: (V(\mu,x)+G(\mu,x,\theta))\otimes (V(\mu,y)+G(\mu,y,\theta)) \mu(dx)\mu(dy)\\
    &+ \eta^2 \int_{ \R^d  }   \nabla\D\Psi(\law (\xi_s),\xi_s(x))\\
    &\qquad\qquad: (V(\mu,x)+G(\mu,x,\theta))\otimes (V(\mu,x)+G(\mu,x,\theta))\mu(dx).
  \end{align*}
  The third derivative $\frac{ d^3 }{ ds^3 }\Psi(\law(\xi_s))$ can be computed analogously. Since its precise form is not needed, we omit its computation and note only that $\frac{ d^3 }{ ds^3 }\Psi(\law(\xi_s))$, $s\in[0,1]$, is uniformly bounded by $C\|\Psi\|_{\Cf^{3}_b}$ for some constant $C>0$. 
  
  Taking the expectation of~\eqref{equ_expansion_of_Psi} with respect to $\m$ and using the dominated convergence theorem, the equalities $\xi_0(x)=x$, $\law(\xi_0)=\mu$, $ \E_{\m}G(\mu,x,\theta)=0$ and the fact that $\Psi \in \Cf^{3}_b(\cP_2(\R^d ))$, we obtain 
  \begin{equation} 
  \label{equ_expansion_of_psi}
    \begin{split}
      \cS \Psi(\mu)&= \E_{\m}\Psi(\law(\xi_1))=\Psi(\mu)+\eta \int_{ \R^d  }   \D\Psi(\mu,x)\cdot V(\mu,x)\mu(dx)\\
      &+\frac{\eta^2}{2} \int_{ \R^d  }   \int_{ \R^d  }   \D^2\Psi(\mu,x,y): V(\mu,x)\otimes V(\mu,y)\mu(dx)\mu(dy)\\
      &+\frac{\eta^2}{2} \int_{ \R^d  }   \nabla\D\Psi(\mu,x):V(\mu,x)\otimes V(\mu,x)\mu(dx)\\
      &+\frac{\eta^2}{2} \int_{ \R^d  }   \int_{ \R^d  }   \D^2\Psi(\mu,x,y):\tilde A(\mu,x,y)\mu(dx)\mu(dy)\\
      &+\frac{\eta^2}{2} \int_{ \R^d  }   \nabla\D\Psi(\mu,x):A(\mu,x)\mu(dx)+\eta^3R_1(\Psi,\mu) ,
    \end{split}
  \end{equation}
  where $\sup_{\mu \in \cP_2(\R^d )}|R_1(\Psi,\mu)|\leq  C\|\Psi\|_{\Cf^{3}_b}$, for a constant $C>0$ and
  $$
  \tilde{A}(\mu,x,y)=\E_{\m}\left[G(\mu,x,\theta )\otimes G(\mu,y,\theta )\right], \quad  A(\mu,x)=\tilde{A}(\mu,x,x).
  $$

  We next expand the generator of the process  $\Lambda_t^\eta(\mu)$, $t\geq  0$. Recall that $U(t,\mu)=\E\Phi(\Lambda_{t}(\mu))$, $t\geq 0$, $\mu \in \cP_2(\R^d )$. According to Proposition~\ref{pro_kolmogorov_equation}, we can conclude that for every $t\geq t_i$
  \begin{equation} 
  \label{equ_equality_for_u}
    U(t,\mu)=U(t_i,\mu)+ \int_{ t_i }^{ t } \cL U(r,\mu)dr, 
  \end{equation}
  where $\cL=\cL_1+ \eta\cL_2$ and
  \begin{align*}
    \cL_1 U(r,\mu):&=  \int_{\R^d}   V(\mu,x)\cdot \D U(r,\mu,x ) \mu(dx),\\
    \cL_2U(r,\mu):&= \frac{1}{ 2 } \int_{ \R^d  }   \int_{ \R^d  }   \tilde{A}(\mu,x,y):\D^2U(r,\mu,x ,y )  \mu(dx)\mu(dy)\\
    &+ \frac{1}{ 2 }\int_{ \R^d  }   A(\mu,x):\nabla\D U(r,\mu,x )\mu(dx)\\
    &- \frac{1}{ 4 } \int_{ \R^d  }   \nabla|V(\mu,x)|^2\cdot \D U(r,\mu,x) \mu(dx)\\
    &- \frac{1}{ 4 } \int_{ \R^d  }   \int_{ \R^d  }   \D|V(\mu,x)|^2(y)\cdot\D U(r,\mu,x ) \mu(dx)\mu(dy).
  \end{align*}

  Iterating the equality~\eqref{equ_equality_for_u} as in the proof of Lemma~3 in~\cite{Lei:2022}, we obtain
  \begin{equation} 
  \label{equ_expansion_for_ut}
    U(t_{i+1},\mu)=U(t_i,\mu)+\eta\cL_1U(t_i,\mu)+ \eta^2\left( \cL_2+ \frac{1}{ 2 }\cL_1^2 \right) U(t_i,\mu)+\eta^3R_2(\mu),
  \end{equation}
  where $\sup_{\mu \in \cP_2(\R^d )}|R_2(\mu)|\leq C \|U\|_{\Cf^{0,4}_b([0,T]\times \cP_2(\R^d ))}$ for a constant $C>0$. 
  
  In order to compare $\cS U(t_i,\mu)$ and $U(t_{i+1},\mu)$, we next express  $\cL_2+ \frac{1}{ 2 }\cL_1^2$ in terms of the coefficients of the equation~\eqref{equ_modified_sde_general}.  Note that, according to Example~\ref{exa_derivative_of_cylindrical_function}, we have
  \begin{align*}
    \D \cL_1 U(r,\mu,x)&= \nabla\left[ V(\mu,x)\cdot \D U(r,\mu,x)\right]+\int_{ \R^d  }   \D \left[ V(\mu,y)\cdot\D U(r,\mu,y ) \right](x) \mu (dy)\\
    &= \D U(r,\mu,x)\nabla V(\mu,x)+V(\mu,x)\nabla  \D U(r,\mu,x)\\
    &+\int_{ \R^d  }  \D U(r,\mu,y ) \D V(\mu,y,x) \mu(dy)+ \int_{ \R^d  }   V(\mu,y) \D^2U(r,\mu,y ,x)\mu(dy).
  \end{align*}
  Thus, using the equality $ \frac{1}{ 2 }\nabla|V(\mu,x)|^2=V(\mu,x)\nabla V(\mu,x) $ and $ \frac{1}{ 2 }\D |V(\mu,x)|^2(y)= V(\mu,x)\D V(\mu,x,y)$, we get
  \begin{align*}
     \cL_1^2U(r,\mu,x)&=  \frac{1}{ 2 }\int_{ \R^d  }   \nabla |V(\mu,x )|^2 \cdot \D U(r,\mu,x )\mu(dx)\\
    &+\int_{ \R^d  }   \nabla \D U(r,\mu,x) : V(\mu,x)\otimes V(\mu,x)\mu(dx) \\
    &+ \frac{1}{ 2 }\int_{ \R^d  } \int_{ \R^d  }   \D|V(\mu,x)|^2(y) \cdot \D U(r,\mu,x)\mu(dx)\mu(dy) \\
    &+ \int_{ \R^d }   \int_{ \R^d  }   \D^2U(r,\mu,x,y):V(\mu,x)\otimes V(\mu,y)\mu(dx)\mu(dy).
  \end{align*}
  Consequently,  
  \begin{align*}
    \left( \cL_2+ \frac{1}{ 2 }\cL_1^2 \right)U(r,\mu)&= \frac{1}{ 2 } \int_{ \R^d  }   \int_{ \R^d  }   \tilde{A}(\mu,x,y):\D^2U(r,\mu,x ,y )  \mu(dx)\mu(dy)\\
    &+ \frac{1}{ 2 }\int_{ \R^d  }   A(\mu,x):\nabla\D U(r,\mu,x )\mu(dx)\\
    &- \frac{1}{ 4 } \int_{ \R^d  }   \nabla|V(\mu,x)|^2\cdot \D U(r,\mu,x) \mu(dx)\\
    &- \frac{1}{ 4 } \int_{ \R^d  }   \int_{ \R^d  }   \D|V(\mu,x)|^2(y)\cdot\D U(r,\mu,x ) \mu(dx)\mu(dy)\\
    &+\frac{1}{ 4 }\int_{ \R^d  }   \nabla |V(\mu,x )|^2 \cdot \D U(r,\mu,x )\mu(dx)\\
    &+\frac{1}{2}\int_{ \R^d  }   \nabla \D U(r,\mu,x) : V(\mu,x)\otimes V(\mu,x)\mu(dx) \\
    &+ \frac{1}{ 4 }\int_{ \R^d  } \int_{ \R^d  }   \D|V(\mu,x)|^2(y) \cdot \D U(r,\mu,x)\mu(dx)\mu(dy) \\
    &+ \frac{1}{2}\int_{ \R^d }   \int_{ \R^d  }   \D^2U(r,\mu,x,y):V(\mu,x)\otimes V(\mu,y)\mu(dx)\mu(dy)\\
    &= \frac{1}{ 2 } \int_{ \R^d  }   \int_{ \R^d  }   \tilde{A}(\mu,x,y):\D^2U(r,\mu,x ,y )  \mu(dx)\mu(dy)\\
    &+ \frac{1}{ 2 }\int_{ \R^d  }   A(\mu,x):\nabla\D U(r,\mu,x )\mu(dx)\\
    &+ \frac{1}{ 2 }\int_{ \R^d  }   \nabla \D U(r,\mu,x) : V(\mu,x)\otimes V(\mu,x)\mu(dx) \\
    &+ \frac{1}{ 2 }\int_{ \R^d }   \int_{ \R^d  }   \D^2U(r,\mu,x,\mu):V(\mu,x)\otimes V(\mu,y)\mu(dx)\mu(dy).
  \end{align*} 
  
  Comparing~\eqref{equ_expansion_for_ut} with~\eqref{equ_expansion_of_psi} for $\Psi=U(t_i,\cdot)$, we conclude that 
  \begin{align*}
  \cS U(t_{i},\mu)&=U(t_i,\mu)+\eta\cL_1U(t_i,\mu)+ \eta^2\left( \cL_2+ \frac{1}{ 2 }\cL_1^2 \right) U(t_i,\mu)+\eta^3R_1(U(t_{i},\cdot),\mu)\\
  &=U(t_{i+1},\mu)+\eta^3R_1(U(t_{i},\cdot),\mu)-\eta^3R_2(\mu).
  \end{align*}
  
  Inserting into \eqref{equ_comparison_of_semigroups}, and using the fact that $U \in \Cf^{0,4}_b([0,T]\times \cP_2(\R^d ))$ (see Proposition~\eqref{pro_kolmogorov_equation}), yields, for all $n \in \N$ with $n\eta \leq T$
  \begin{align*}
  \sup\limits_{\mu \in \cP_2(\R^d )}|\E\Phi(\Lambda_{n\eta}(\mu))-\E\Phi(\Gamma_n(\mu))|&\leq 
  \sup\limits_{ \mu \in \cP_2(\R^d ) }\sum_{ i=0 }^{ n-1 } \eta^3|R_1(U(t_{i},\cdot ),\mu)-R_2(\mu)|\\
  &\leq C n\eta^3 \le CT\eta^2.
  \end{align*}
  This completes the proof of the theorem.
\end{proof}

\begin{remark} 
  From the proof of Theorem~\ref{the_main_result} one can see that for every $\Phi \in \Cf^{4}_b(\cP_2(\R^d ))$ and $T>0$ there exists a constant $C>0$ such that 
  \[
    \sup\limits_{ \mu \in \cP_2(\R^d ) }\sup\limits_{ n: n \eta\leq T }\left| \E\Phi(\Lambda_{n\eta}(\mu))-\E\Phi (\Gamma_n(\mu)) \right|\leq C\eta,
  \]
  for all $\eta >0$, if $\Lambda_t=\Lambda_t(\mu)$, $t\geq 0$, is defined by the SDE with interaction
  \begin{align*}
    dX_t(x)&= V(\Lambda_t,X_t(x))dt+\sqrt{ \eta }\int_{ \Theta }   G(\Lambda_t,X_t(x))W(d \theta,dt), \\
    X_0(x)&= x, \quad \Lambda_t=\mu\circ X_t^{-1}, \quad x \in \R^d ,\ \ t\geq 0.
  \end{align*}
\end{remark}

We now apply Theorem~\ref{the_main_result} to the comparison of the SGD dynamics and stochastic modified flows considered in the introduction. First, we recover a variant of the statement for stochastic modified equations.

\begin{corollary}\label{cor_comparison_for_one_point_motion}
 Let $Z_{n}^\eta(x)$, $n\in\N_0$, be defined by~\eqref{equ_SGD_non_measure_depended_case} for a loss function $\tilde R$ and $X_t^\eta(x)$, $t\geq 0$, be a solution to~\eqref{eq:intro_SMF}. Let also $\tilde R(\cdot,\theta)\in \Cf^6_b(\R^d)$ for $\m$-a.e. $\theta\in\Theta$ and assume that
\[
\int_\Theta \| \tilde R(\cdot,\theta)\|_{\Cf^6_b}^2\m(d\theta)<\infty.
\]

Then, for every $f\in\Cf^4_b(\R^d)$ and $T>0$, there exists a constant $C>0$ independent of $\eta$ such that
\[
\sup_{x\in\R^d}\sup_{n:n\eta\leq T}\left|\E f(X_{n\eta}^\eta(x))-\E f(Z_{n}^\eta(x))\right|\leq C \eta^2
\]
for all $\eta>0$.
\end{corollary}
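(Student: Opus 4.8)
The plan is to deduce the statement as a direct specialization of Theorem~\ref{the_main_result}, by choosing the coefficients of~\eqref{equ_sgd_general_form} and~\eqref{equ_modified_sde_general} so that they reproduce the one-point SGD~\eqref{equ_SGD_non_measure_depended_case} and the SMF~\eqref{eq:intro_SMF}, and then testing against the Dirac initial datum $\mu=\delta_x$ and the linear functional $\Phi(\nu)=\langle f,\nu\rangle$. The crucial observation is that the one-point motion is recovered from the measure-valued dynamics by pushing forward a point mass: since $\Gamma_n^\eta(\delta_x)=\delta_{Z_n^\eta(x)}$ and $\Lambda_{t}^\eta(\delta_x)=\delta_{X_t^\eta(x)}$, evaluating a linear functional at these Diracs returns exactly $f$ of the one-point motion.

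Concretely, I would take $V(\mu,z)=-\nabla R(z)$ and $G(\mu,z,\theta)=-(\nabla_z\tilde R(z,\theta)-\nabla R(z))$, both independent of the measure argument $\mu$. Writing $-\nabla\tilde R=-\nabla R-(\nabla\tilde R-\nabla R)$ identifies~\eqref{equ_SGD_non_measure_depended_case} with~\eqref{equ_sgd_general_form} for this choice, and $\E_\m G=-(\E_\m\nabla\tilde R-\nabla R)=0$ holds by the definition $R=\E_\m\tilde R$. Because $V$ does not depend on $\mu$, one has $\D V\equiv 0$, so $\langle\D|V|^2,\Lambda_t^\eta\rangle\equiv 0$ and the drift of~\eqref{equ_modified_sde_general} collapses to $-\nabla R-\frac{\eta}{4}\nabla|\nabla R|^2=-\nabla(R+\frac{\eta}{4}|\nabla R|^2)$, which is precisely the drift of~\eqref{eq:intro_SMF}. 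On the diffusion side, the generator $\cL$ of~\eqref{equ_modified_sde_general} (hence the law of its solution, and in particular of its one-point motion) depends on $G$ only through $\tilde A=\E_\m[G\otimes G]$, which is invariant under $G\mapsto -G$; therefore the solution of~\eqref{equ_modified_sde_general} with the above $G$ has the same law as the solution of~\eqref{eq:intro_SMF}, whose diffusion coefficient is $+(\nabla\tilde R-\nabla R)$.

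Next I would check the regularity hypotheses of Theorem~\ref{the_main_result}; this is the only place the $\Cf^6_b$ assumption is consumed, and it is the binding constraint. Since $V=-\nabla R$ is $\mu$-independent, $V\in\Cf^{5,5}_b(\cP_2(\R^d)\times\R^d)$ is equivalent to $\nabla R\in\Cf^5_b(\R^d)$, i.e. $R=\E_\m\tilde R\in\Cf^6_b(\R^d)$; differentiating under the expectation and using that $\m$ is finite together with $\int_\Theta\|\tilde R(\cdot,\theta)\|_{\Cf^6_b}^2\m(d\theta)<\infty$ gives the required boundedness. The same integrability yields $G\in\tilde\Cf^{4,4}_b(\cP_2(\R^d)\times\R^d)$, since $G(\cdot,\theta)$ and its spatial derivatives up to order four are controlled pointwise by $\|\tilde R(\cdot,\theta)\|_{\Cf^6_b}+\|R\|_{\Cf^6_b}$ and are square-integrable in $\theta$, hence bounded as $\L{\Theta}{\R^d}{\m}$-valued maps. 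Finally, $\Phi(\nu)=\langle f,\nu\rangle$ lies in $\Cf^4_b(\cP_2(\R^d))$ for $f\in\Cf^4_b(\R^d)$: by Example~\ref{exa_derivative_of_cylindrical_function} one has $\D\Phi(\mu,x)=\nabla f(x)$, which is $\mu$-independent, so all higher Lions derivatives vanish and the only surviving derivatives are the spatial ones $\nabla^k f$, $k\le 4$, which are bounded.

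It then remains to conclude. With $\mu=\delta_x\in\cP_2(\R^d)$ the pushforward identities give $\Phi(\Gamma_n^\eta(\delta_x))=f(Z_n^\eta(x))$ and $\Phi(\Lambda_{n\eta}^\eta(\delta_x))=f(X_{n\eta}^\eta(x))$, so taking expectations and invoking~\eqref{equ_the_main_estimate_in_general_case} yields
\[
\sup_{x\in\R^d}\sup_{n:n\eta\le T}\left|\E f(X_{n\eta}^\eta(x))-\E f(Z_n^\eta(x))\right|\le \sup_{\mu\in\cP_2(\R^d)}\sup_{n:n\eta\le T}\left|\E\Phi(\Lambda_{n\eta}^\eta(\mu))-\E\Phi(\Gamma_n^\eta(\mu))\right|\le C\eta^2,
\]
where the first inequality holds because $\{\delta_x:x\in\R^d\}\subseteq\cP_2(\R^d)$. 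The genuinely new estimate is entirely contained in Theorem~\ref{the_main_result}; the main obstacle here is only the (routine) verification of the coefficient regularity in the previous step, the remainder being the bookkeeping of the Dirac reduction.
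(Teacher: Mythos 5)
Your proposal is correct and follows essentially the same route as the paper's own proof: specialize Theorem~\ref{the_main_result} to $V=-\nabla R$, a measure-independent $G=\pm(\nabla\tilde R-\nabla R)$, the linear functional $\Phi(\nu)=\langle f,\nu\rangle$, and Dirac initial data $\mu=\delta_x$. You are in fact slightly more careful than the paper, which sets $G:=\nabla\tilde R-\nabla R$ even though matching the SGD recursion~\eqref{equ_SGD_non_measure_depended_case} exactly requires $G=-(\nabla\tilde R-\nabla R)$; your observation that the law on the SDE side is unaffected because the generator sees $G$ only through $\tilde A=\E_{\m}[G\otimes G]$, which is invariant under $G\mapsto-G$, is exactly what reconciles this sign discrepancy.
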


\begin{proof}
Using the dominated convergence theorem it is easily seen that the functions $V:=-\nabla R$ and $G:=\nabla \tilde R-\nabla R$ belong to $\Cf_b^{5,5}(\cP_2(\R^d)\times \R^d)$ and $\tilde\Cf^{4,4}_b(\cP_2(\R^d)\times \R^d)$, respectively, where $R=\E_{\m}\tilde R$. Hence, applying Theorem~\ref{the_main_result} to the function $\Phi(\mu)=\langle f,\mu\rangle$, $\mu\in\cP_2(\R^d)$, that trivially belongs to $\Cf^4_b(\cP_2(\R^d))$, we obtain 
\begin{align*}
\sup_{x\in\R^d}\sup_{n:n\eta\leq T}&\left|\E f(X_{n\eta}^\eta(x))-\E f(Z_{n}^\eta(x))\right|\\
&=\sup_{\mu=\delta_x,x\in\R^d}\sup_{n:n\eta\leq T}\left|\E \langle f(X_{n\eta}^\eta),\mu\rangle-\E \langle f(Z_{n}^\eta),\mu\rangle\right|\\
&\leq\sup\limits_{ \mu \in \cP_2(\R^d ) }\sup\limits_{ n: n \eta\leq T }\left| \E\Phi(\Lambda_{n\eta}^\eta(\mu))-\E\Phi (\Gamma_n^\eta(\mu)) \right|\leq C \eta^2,
\end{align*}
for all $\eta>0$ and some constant $C>0$ independent of $\eta$, where $\Lambda_t^\eta(\mu)=\mu\circ (X_t^\eta)^{-1}$ and $\Gamma_n^\eta(\mu)=\mu\circ (Z_n^{\eta})^{-1}$. This completes the proof of the statement.
\end{proof}

\begin{corollary}\label{cor_muly_point_motion}
Under the assumptions of Corollary~\ref{cor_comparison_for_one_point_motion},  for every $m\in\N$, $f\in\Cf_b^4(\R^{dm})$, $\Phi\in\Cf^4_b(\cP_2(\R^d))$ and $T>0$ there exists a constant $C>0$ independent of $\eta$ such that
\begin{equation}\label{equ_m_point_motion}
\sup_{x_1,\ldots,x_m\in\R^d}\sup_{n:n\eta\leq T}\left|\E f(X_{n\eta}^\eta(x_1),\ldots, X_{n\eta}^\eta(x_m))-\E f(Z_{n}^\eta(x_1),\ldots,Z_n^\eta(x_m))\right|\leq C\eta^2
\end{equation}
and
\begin{equation}\label{equ_evolution_of_measure_in_sgd}
\sup_{\mu\in\cP_2(\R^d)}\sup_{n:n\eta\leq T}\left|\E\Phi(\mu\circ (X_{n\eta}^\eta)^{-1})-\E\Phi(\mu\circ (Z_{n}^\eta)^{-1})\right|\leq C\eta^2
\end{equation}
for all $\eta>0$.
\end{corollary}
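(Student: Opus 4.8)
The plan is to derive both estimates from Theorem~\ref{the_main_result}, applied to the measure-independent coefficients $V=-\nabla R$ and $G=\nabla\tilde R-\nabla R$ already identified in the proof of Corollary~\ref{cor_comparison_for_one_point_motion}. Because $V$ and $G$ do not depend on the measure argument, the Lions-derivative correction $\langle\D|V|^2,\Lambda\rangle$ in~\eqref{equ_modified_sde_general} vanishes and its drift reduces to $-\nabla(R+\tfrac\eta4|\nabla R|^2)$, so the DDSMF~\eqref{equ_modified_sde_general} collapses to the stochastic modified flow~\eqref{eq:intro_SMF}. Consequently $\Lambda^\eta_{n\eta}(\mu)=\mu\circ(X^\eta_{n\eta})^{-1}$ and $\Gamma^\eta_n(\mu)=\mu\circ(Z^\eta_n)^{-1}$, with $X^\eta$ the SMF flow and $Z^\eta$ the SGD chain. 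Since $V\in\Cf^{5,5}_b$ and $G\in\tilde\Cf^{4,4}_b$ were verified in Corollary~\ref{cor_comparison_for_one_point_motion}, the hypotheses of Theorem~\ref{the_main_result} are met.

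With these identifications the estimate~\eqref{equ_evolution_of_measure_in_sgd} is immediate: for the given $\Phi\in\Cf^4_b(\cP_2(\R^d))$ it is literally~\eqref{equ_the_main_estimate_in_general_case}, the supremum over $\mu\in\cP_2(\R^d)$ being taken verbatim. So no further work is needed here.

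For the $m$-point estimate~\eqref{equ_m_point_motion} I would enlarge the state space to $\R^{dm}$. Writing $\mathbf x=(x_1,\dots,x_m)$, I define the block coefficients $\mathbf V(\mathbf z)=(V(z_i))_{i\in[m]}$ and $\mathbf G(\mathbf z,\theta)=(G(z_i,\theta))_{i\in[m]}$, which are measure-independent, satisfy $\E_\m\mathbf G=0$, and inherit the regularity $\mathbf V\in\Cf^{5,5}_b(\cP_2(\R^{dm})\times\R^{dm})$, $\mathbf G\in\tilde\Cf^{4,4}_b(\cP_2(\R^{dm})\times\R^{dm})$ from $R\in\Cf^6_b(\R^d)$ (the verification is that of Corollary~\ref{cor_comparison_for_one_point_motion}, only in higher dimension). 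I then apply Theorem~\ref{the_main_result} in $\R^{dm}$ with these coefficients, with $\Phi(\rho)=\langle f,\rho\rangle$ for $f\in\Cf^4_b(\R^{dm})$, and with initial measure $\mu=\delta_{\mathbf x}$. The crucial step is to identify the resulting $\R^{dm}$-valued SMF flow $\mathbf X^\eta_t(\mathbf x)$ with the joint single-point motion $(X^\eta_t(x_1),\dots,X^\eta_t(x_m))$ driven by the common cylindrical Wiener process $W$: since $|\mathbf V(\mathbf z)|^2=\sum_i|V(z_i)|^2$ is separable, one has $\nabla_{z_i}|\mathbf V|^2=\nabla|V(z_i)|^2$, so the $O(\eta)$ drift correction decouples coordinatewise and each coordinate of the enlarged equation is exactly the SMF~\eqref{eq:intro_SMF} started at $x_i$ with the same noise $W$; the two therefore coincide by pathwise uniqueness. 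In the same way the enlarged chain~\eqref{equ_sgd_general_form} has coordinates $(Z^\eta_n(x_i))_i$, the single sample $\theta_n$ per step reproducing the shared data of the joint SGD motion. Hence $\E f(X^\eta_{n\eta}(x_1),\dots,X^\eta_{n\eta}(x_m))=\E\Phi(\Lambda^\eta_{n\eta}(\delta_{\mathbf x}))$ and likewise for $Z$, and~\eqref{equ_m_point_motion} follows from~\eqref{equ_the_main_estimate_in_general_case} uniformly in $\mathbf x$, since the bound there is uniform over all measures in $\cP_2(\R^{dm})$, in particular over the Dirac masses $\delta_{\mathbf x}$.

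I expect the only delicate point to be this identification of the $dm$-dimensional flow (and chain) with the joint motion: one must check carefully that running~\eqref{equ_modified_sde_general} with the block coefficients feeds the same $W$ into every coordinate and that the drift correction splits across coordinates, so that uniqueness yields $\mathbf X^\eta_t(\mathbf x)=(X^\eta_t(x_i))_{i}$. By contrast, the regularity bookkeeping for $\mathbf V,\mathbf G$ and the reduction of the DDSMF to the SMF are routine and already contained in Corollary~\ref{cor_comparison_for_one_point_motion}.
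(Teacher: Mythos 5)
Your proposal is correct and follows essentially the same route as the paper: the measure estimate is read off directly from Theorem~\ref{the_main_result}, and the $m$-point estimate comes from lifting to $\R^{dm}$, using the separable structure so that the $O(\eta)$ drift correction decouples coordinatewise, identifying the lifted flow and chain with the joint motions via uniqueness and the shared noise/sample, and applying the main estimate with $\Phi(\rho)=\langle f,\rho\rangle$ at Dirac initial measures. The only cosmetic difference is that the paper packages the lift through the summed loss $\tilde R^{\ext}(z,\theta)=\sum_{i}\tilde R(z_i,\theta)$ and invokes Corollary~\ref{cor_comparison_for_one_point_motion}, whereas you define the block coefficients $\mathbf{V},\mathbf{G}$ directly and invoke Theorem~\ref{the_main_result}; these yield exactly the same coefficients and the same argument.
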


\begin{proof}
The estimate~\eqref{equ_evolution_of_measure_in_sgd} can be obtained by the same argument as in the proof of Corollary~\ref{cor_comparison_for_one_point_motion}. To prove~\eqref{equ_m_point_motion}, we will apply Corollary~\ref{cor_comparison_for_one_point_motion} to the function
\[
\tilde R^{\ext}(z,\theta)=\tilde R(z_1,\theta)+\ldots +\tilde R(z_m,\theta), \quad z=(z_i)_{i\in[m]}\in\R^{dm},\ \  \theta\in\Theta.
\] 
Note that
\[
    \nabla \tilde R^{\ext}(z,\theta)=\left(\nabla_{z_i}\tilde R(z_i,\theta)\right)_{i\in[m]}
\]
for all $z=(z_i)_{i\in[m]}\in\R^{dm}$ and $\theta\in\Theta$. Defining $Z_n^{\ext,\eta}(x)$, $n\in\N_0$, by~\eqref{equ_SGD_non_measure_depended_case} with $\tilde R$ and $\R^d$ replaced by $\tilde R_{\ext}$ and $\R^{dm}$, respectively, it is easily seen that 
\[
   Z^{\ext,\eta}_n(x)=\left(Z^\eta_n(x_i)\right)_{i\in[m]},\quad n\in\N_0,
\]
for all $x=(x_i)_{i\in[m]}\in\R^{dm}$.

We next set $R^{\ext}(z)=\E_{\m}\tilde R^{\ext}(z,\theta)$, $z=(z_i)_{i\in [m]}$. Then  
\[
   \nabla R^{\ext}(z)=\left(\nabla_{z_i} R(z_i)\right)_{i\in[m]}
\]
and 
\[
  \nabla |\nabla R^{\ext}(z)|^2=\left(\nabla_{z_i}|\nabla_{z_i}R(z_i)|^2\right)_{i\in[m]}
\]
for all $z=(z_i)_{i\in[m]}\in\R^{dm}$. Moreover, 
\[
G^{\ext}(z,\theta):=\nabla\tilde R^{\ext}(z,\theta)-\nabla R^{\ext}(z,\theta)=\left(G(z_i, \theta)\right)_{i\in[m]},
\]
where $G$ is the coefficient of~\eqref{eq:intro_SMF} that equals $\nabla \tilde R-\nabla R$. Under the assumptions of the corollary, equation~\eqref{eq:intro_SMF} with $R$ and $G$ replaced by $R^{\ext}$ and $G^{\ext}$, respectively, has a unique solution $X_t^{\ext,\eta}(x)$, $x\in\R^{dm}$, $t\geq 0$. Moreover,
\[
X_t^{\ext,\eta}(x)=\left(X_t^{\eta}(x_i)\right)_{i\in[m]},\quad t\geq 0,
\]
a.s. for all $x=(x_i)_{i\in[m]}$. Since $\tilde R^{\ext}$ satisfies the assumptions of Corollary~\ref{cor_comparison_for_one_point_motion}, one gets for every $f\in\Cf_b^4(\R^{dm})$
\begin{align*}
    &\sup_{x\in\R^{dm}}\left|\E f(X_{n\eta}^{\ext,\eta}(x))-f(Z_n^{\ext,\eta}(x))\right|\\
    &=\sup_{x_1,\ldots,x_m\in\R^d}\sup_{n:n\eta\leq T}\left|\E f(X_{n\eta}^\eta(x_1),\ldots, X_{n\eta}^\eta(x_m))-\E f(Z_{n}^\eta(x_1),\ldots,Z_n^\eta(x_m))\right|\leq C\eta^2
\end{align*}
for a constant $C>0$ independent of $\eta$. This completes the proof of the statement.
\end{proof}

In the next example, we show that Corollary~\ref{cor_muly_point_motion} cannot hold for the solution to the classical stochastic modified equation \eqref{eq:intro_SDE}, since the distribution of the two-point motion is different from the distribution of the two-point motion of \eqref{eq:intro_SMF}.

\begin{example} \label{exa:two_point_motion}
The covariation of the two-point motion $(X_t^\eta(x),X_t^\eta(\bar x))$, $t\geq 0$, from the SMF~\eqref{eq:intro_SMF} equals
	\begin{equation} \label{eq:111}
	[X^\eta(x), X^\eta(\bar x)]_t = \eta \int_0^t\tilde A(X_s^\eta(x),X_s^\eta(\bar x))ds,\quad t\geq 0,
	\end{equation}
   where $\tilde A(x,y)=\langle G(x,\cdot)\otimes G(y,\cdot)\rangle_{\m}$. However, the covariation of the two-point motion $(Y^\eta_t(x),Y^\eta_t(\bar x))$, $t\geq 0$, obtained from the SDE~\eqref{eq:intro_SDE}, is given by
	\begin{equation}\label{equ_covariance_of_Y}
	[Y^\eta(x), Y^\eta(\bar x)]_t = \eta \int_0^t\Sigma(Y_s(x))^{1/2} \Sigma( Y_s(\bar x))^{1/2} ds,\quad t\geq 0,
	\end{equation}
	for $\Sigma(x)=\tilde A(x,x)$. This implies that the processes $(X^\eta(x),X^\eta(\bar x))$ and $(Y^\eta(x),Y^\eta(\bar x))$ have different distributions in general. We further notice that the covariance of the one step SGD dynamics defined by~\eqref{equ_SGD_non_measure_depended_case} satisfies
$$
	\cov(Z_1^\eta(x),Z_1^\eta(y)) = \eta^2 \tilde A(x,y),
$$
which is comparable with~\eqref{eq:111}, but not with~\eqref{equ_covariance_of_Y}.
\end{example}

Next, we consider the SGD scheme $Z_n^\eta=(Z_n^{i,\eta})_{i \in[M]}$, $n\in\N_0$, incorporating the infinite width limit that is defined by~\eqref{equ_sgd_overpar}, where $Z_0^{i,\eta}$, $i\in[M]$, are i.i.d. random variables sampled from a measure $\mu\in\cP_2(\R^d)$. We prove the convergence of the empirical distribution process $\Gamma_n^{M,\eta}= \frac{1}{M}\sum_{i=1}^M\delta_{Z_n^{i,\eta}}$, $n\in\N_0$, to a mean-field solution $\Lambda_t^\eta=\mu\circ(X_t^\eta)^{-1}$, $t\geq 0$, of the DDSMF defined by~\eqref{equ_modified_sde_with_interaction_for_sgd}.

\begin{corollary}\label{cor_overparam_case}
Let $\mu\in\cP_2(\R^d)$ and $\mu^M=\frac{1}{M}\sum_{j=1}^m\delta_{Z_0^{j,\eta}}$, where $Z_0^{j,\eta}$, $j\in[M]$, are i.i.d. random variables with distribution $\mu$. Let $\Gamma_n^{M,\eta}$, $n\in\N_0$, and $\Lambda_t^\eta$, $t\geq 0$, be as in~\eqref{equ_sgd_overpar} and~\eqref{equ_modified_sde_with_interaction_for_sgd}, respectively, with $\Gamma_0^{M,\eta}=\mu^M$ and $\Lambda_0^\eta=\mu$. Assume that the function $\Psi$ in~\eqref{equ_function_f_M} satisfies: $\Psi(\cdot,\theta)\in \Cf^6_b(\R^d)$ for $\m$-a.e. $\theta\in\Theta$  and 
\[
\int_\Theta \left(\|\Psi(\cdot,\theta)\|_{\Cf^6_b}^2+|f(\theta )|^2\right)  \|\Psi(\cdot,\theta)\|^2_{\Cf^6_b}\m(d\theta)<\infty.
\]
Then, for every $\Phi\in\Cf^4_b(\cP_2(\R^d))$ there exists a constant $C>0$ independent of $\eta$ and $M$ such that
\begin{equation}\label{equ_estimate_in_corollary3}
\sup_{n:n\eta\leq T}\left|\E\Phi(\Lambda_{n\eta}^\eta)-\E\Phi(\Gamma_n^{M,\eta})\right|\leq C\eta^2+C\sqrt{\E\cW_2^2(\mu,\mu^M)}
\end{equation}
for all $\eta>0$ and $M\in\N$. In particular, if $\mu$ has finite $p$th moment for some $p>2$, with $p\not=4$ for $d\leq 4$ and $p\not=\frac{d}{d-2}$ for $d\geq 5$, then for every $a>0$ there exists a constant $C>0$ independent of $\eta$ and $M$ such that
\begin{equation}\label{equ_estimate_in_corol_for_overp}
\sup_{n:n\eta\leq T}\left|\E\Phi(\Lambda_{n\eta}^\eta)-\E\Phi(\Gamma_n^{M,\eta})\right|\leq C\eta^2
\end{equation}
for all $\eta>0$ and $M\in\N$ satisfying $\frac{K(M)}{\eta^4}\leq a$, where
\[
K(M)=\begin{cases}
   M^{-\frac{1}{2}}+M^{-\frac{p-2}{2}}& \mbox{ if } d\leq 3, \\
   M^{-\frac{1}{2}}\ln(1+M)+M^{-\frac{p-2}{2}}& \mbox{ if } d=4,\\
   M^{-\frac{2}{d}}+M^{-\frac{p-2}{2}}& \mbox{ if } d\geq 5.
\end{cases}
\]
\end{corollary}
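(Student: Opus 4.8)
The plan is to deduce the statement from Theorem~\ref{the_main_result} by controlling the mismatch between the deterministic initial datum $\mu$ of the DDSMF and the random empirical datum $\mu^M$ of the SGD scheme. By the remark following~\eqref{equ_sgd_general_form}, the overparameterized scheme satisfies $\Gamma_n^{M,\eta}=\Gamma_n^\eta(\mu^M)$, i.e.\ it is exactly the empirical-measure process of~\eqref{equ_sgd_general_form} started from $\mu^M$, with the coefficients $V$, $G$ of~\eqref{equ_defintion_of_V_and_G_in_introduction}. Since the DDSMF in the corollary is started from $\mu$, I would split
\begin{align*}
\left|\E\Phi(\Lambda_{n\eta}^\eta(\mu))-\E\Phi(\Gamma_n^\eta(\mu^M))\right|
&\le\left|\E\Phi(\Lambda_{n\eta}^\eta(\mu))-\E\Phi(\Lambda_{n\eta}^\eta(\mu^M))\right|\\
&\quad+\left|\E\Phi(\Lambda_{n\eta}^\eta(\mu^M))-\E\Phi(\Gamma_n^\eta(\mu^M))\right|,
\end{align*}
and treat the two terms separately.

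For the second term, the first task is to verify that the coefficients $V$, $G$ satisfy the hypotheses $V\in\Cf^{5,5}_b(\cP_2(\R^d)\times\R^d)$ and $G\in\tilde\Cf^{4,4}_b(\cP_2(\R^d)\times\R^d)$ of Theorem~\ref{the_main_result}. This is where the assumptions $\Psi(\cdot,\theta)\in\Cf^6_b$ and the stated integrability enter: both $V$ and $G$ depend on the measure argument only affinely, through the integrals $\int\Psi(y,\theta)\nu(dy)$ and $\int\nabla_zK(z,y)\nu(dy)$, so their Lions derivatives are explicit and all mixed derivatives up to the required order are bounded once $\int_\Theta(\|\Psi(\cdot,\theta)\|_{\Cf^6_b}^2+|f(\theta)|^2)\|\Psi(\cdot,\theta)\|_{\Cf^6_b}^2\,\m(d\theta)<\infty$, by dominated convergence as in Corollary~\ref{cor_comparison_for_one_point_motion}. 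Having verified this, I would apply Theorem~\ref{the_main_result} conditionally on $\mu^M$: since $\mu^M$ is independent of the driving noise and the bound~\eqref{equ_the_main_estimate_in_general_case} is uniform over all initial measures, conditioning on $\mu^M$ and then taking expectations yields $|\E\Phi(\Lambda_{n\eta}^\eta(\mu^M))-\E\Phi(\Gamma_n^\eta(\mu^M))|\le C\eta^2$ with $C$ independent of $M$.

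For the first term, I would use that $\Phi\in\Cf^4_b(\cP_2(\R^d))$ is in particular in $\Cf^1_b(\cP_2(\R^d))$ and hence Lipschitz with respect to $\cW_2$ with constant $\|\D\Phi\|_\infty$; this follows from the argument of Lemma~\ref{lem_bdd_of_derivatives_implies_lipshitz_continuity} applied to $\Phi$. Thus
\[
\left|\E\Phi(\Lambda_{n\eta}^\eta(\mu))-\E\Phi(\Lambda_{n\eta}^\eta(\mu^M))\right|\le\|\D\Phi\|_\infty\,\E\,\cW_2\!\left(\Lambda_{n\eta}^\eta(\mu),\Lambda_{n\eta}^\eta(\mu^M)\right),
\]
and, conditioning on $\mu^M$, invoking the continuous dependence estimate of Proposition~\ref{pro_continuous_dependence_of_solution} and applying Cauchy--Schwarz, this is bounded by $C\sqrt{\E\cW_2^2(\mu,\mu^M)}$. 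Combining the two terms gives~\eqref{equ_estimate_in_corollary3}.

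Finally, to obtain~\eqref{equ_estimate_in_corol_for_overp}, I would insert a quantitative rate of convergence of the empirical measure of i.i.d.\ samples in Wasserstein distance (the Fournier--Guillin estimates), which under a finite $p$th moment gives $\E\cW_2^2(\mu,\mu^M)\le C\,K(M)$ with $K(M)$ as stated; the excluded exponents $p\ne4$ for $d\le4$ and $p\ne\frac{d}{d-2}$ for $d\ge5$ are precisely those avoiding logarithmic/boundary corrections in that result. Then~\eqref{equ_estimate_in_corollary3} reads $C\eta^2+C\sqrt{K(M)}$, and the constraint $K(M)/\eta^4\le a$ forces $\sqrt{K(M)}\le\sqrt{a}\,\eta^2$, so the second term is absorbed into $C\eta^2$. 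I expect the main technical obstacle to be the careful bookkeeping of the conditioning on the random $\mu^M$, namely ensuring its independence from the noise and the uniformity of all constants in $M$, whereas the coefficient-regularity verification, though lengthy, is routine differentiation under the integral sign.
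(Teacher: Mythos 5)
Your proposal is correct and follows essentially the same route as the paper's proof: the same splitting into the initial-condition mismatch and the scheme-versus-DDSMF error, the same conditioning on $\sigma(Z_0^{i,\eta},\,i\in[M])$ to apply the uniform-in-$\mu$ bound of Theorem~\ref{the_main_result}, the same use of Lemma~\ref{lem_bdd_of_derivatives_implies_lipshitz_continuity} (applied to $\Phi$) together with Proposition~\ref{pro_continuous_dependence_of_solution} for the first term, and the same invocation of the Fournier--Guillin rate to absorb $\sqrt{\E\cW_2^2(\mu,\mu^M)}$ into $C\eta^2$ under $K(M)/\eta^4\le a$. The coefficient-regularity verification via the affine measure dependence and Example~\ref{exa_derivative_of_cylindrical_function} also matches the paper's argument.
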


\begin{proof} 
First, we show that $V \in \Cf^{5,5}_b(\cP_2(\R^d )\times \R^d )$ and $G \in \tilde{\Cf}^{4,4}_b(\cP_2(\R^d )\times \R^d )$, where $V$ and $G$ are given by (\ref{equ_defintion_of_V_and_G_in_introduction}). Analogously to the proof of Corollary~\ref{cor_comparison_for_one_point_motion}, we get that $F \in \Cf_b^6(\R^d)$, where $F(z)=\E_{\m}\left[ f(\theta) \cdot \Psi(z,\theta) \right]$, $z \in \R^d$, and, thus, $\nabla F \in \Cf_b^5(\R^d)$. For $K(z^1,z^2)=\E_{\m}\left[ \Psi(z^1,\theta) \cdot \Psi(z^2,\theta) \right]$, $z_1,z_2 \in \R^d$, we use the dominated convergence theorem to get that $K \in \Cf_b^6(\R^{2d})$.
Using Example~\ref{exa_derivative_of_cylindrical_function}, we get, for $\tilde K(\mu,z)= \langle \nabla_{z}K(z,\cdot ) , \mu \rangle$, $\mu \in \cP_2(\R^d), z \in \R^d$, that
$$
    D \tilde K (\mu,z^1,z^2) = \nabla_{z^2}\nabla_{z^1} K(z^1,z^2),
$$
with analogous expressions for higher derivatives.
Thus, $\tilde K \in \Cf_b^{5,5}(\cP_2(\R^d) \times \R^d)$ and, therefore, $V \in \Cf_b^{5,5}(\cP_2(\R^d )\times \R^d )$. To see that $G \in \tilde{\Cf}_b^{4,4}(\cP_2(\R^d )\times \R^d )$ note that
$$
    \tilde G(\mu,z,\theta) = \left( f(\theta)- \langle \Psi(\cdot,\theta),\mu \rangle  \right)\nabla_{z}\Psi(z,\theta), \quad \mu \in \cP_2(\R^d),\ z \in \R^d,\ \theta \in \Theta,
$$
satisfies $\tilde G \in \tilde \Cf_b^{4,4}(\cP_2(\R^d)\times \R^d)$ and $\E_\m[\tilde G(\cdot, \cdot,\theta)] \in \Cf_b^{4,4}(\cP_2(\R^d) \times \R^d)$.

Note that one needs to check the estimate~\eqref{equ_estimate_in_corollary3} only for $\eta\in(0,T]$. Let $\Lambda_t^\eta(\mu)$, $t\geq 0$, be defined by~\eqref{equ_modified_sde_with_interaction_for_sgd} for every $\mu \in\cP_2(\R^d)$. We next fix $\mu\in\cP_2(\R^d)$ and consider the empirical distribution $\mu^M=\frac 1M\sum_{i=1}^M\delta_{Z^{i,\eta}_0}$ associated with the family of i.i.d. random variables $Z^{i,\eta}_0$, $i\in[M]$, sampled from the distribution $\mu$. By Theorem~\ref{the_main_result}, there exists a constant $C>0$ independent of $\eta$ such that 
\[
\sup_{\mu\in\cP_2(\R^d)}\sup_{n:n\eta\leq T}\left|\E\Phi(\Lambda^\eta_{n\eta}(\mu))-\E\Phi(\Gamma_n^\eta(\mu))\right|\leq C\eta^2
\]
for all $\eta\in(0,T]$, where $\Gamma_n^\eta(\mu)$, $n\in\N_0$, is determined by~\eqref{equ_sgd_general_form} with $V$ and $G$ given by~\eqref{equ_defintion_of_V_and_G_in_introduction}. Therefore, using the equality $\Gamma_n^{M,\eta}=\Gamma_n^{\eta}(\mu^M)$ for all $n \in\N_0$, one has
\begin{align*}
    \sup_{n:n\eta\leq T}&\left|\E\Phi(\Lambda^\eta_{n\eta}(\mu^M))-\E\Phi(\Gamma_n^{M,\eta})\right|=\sup_{n:n\eta\leq T}\left|\E\Phi(\Lambda^\eta_{n\eta}(\mu^M))-\E\Phi(\Gamma_n^\eta(\mu^M))\right|\\
    &=\sup_{n:n\eta\leq T}\left|\E\left[\E\left[\Phi(\Lambda^\eta_{n\eta}(\mu^M)\Big|\cA\right]-\E\left[\Phi(\Gamma_n^\eta(\mu^M))\Big|\cA\right]\right]\right|\\
    &\leq\E\left[\sup_{n:n\eta\leq T}\left|\E\left[\Phi(\Lambda^\eta_{n\eta}(\mu^M)\Big|\cA\right]-\E\left[\Phi(\Gamma_n^\eta(\mu^M))\Big|\cA\right]\right|\right]\\
    &\leq \sup_{\mu\in\cP_2(\R^d)}\sup_{n:n\eta\leq T}\left|\E\Phi(\Lambda^\eta_{n\eta}(\mu))-\E\Phi(\Gamma_n^\eta(\mu))\right|\leq C\eta^2
\end{align*}
for all $\eta\in(0,T]$ and $M\in\N$, where $\cA=\sigma(Z^{i,\eta}_0,\ i\in[M])$.

We next compare $\E\Phi(\Lambda^\eta_{n\eta}(\mu))$ with $\E\Phi(\Lambda^\eta_{n\eta}(\mu^M))$. Applying Lemma~\ref{lem_bdd_of_derivatives_implies_lipshitz_continuity} to $V=\Phi$ and $G=0$, we can estimate 
\[
|\E\Phi(\Lambda^\eta_{n\eta}(\mu))-\E\Phi(\Lambda^\eta_{n\eta}(\mu^M))|^2\leq \|\Phi\|_{\Cf_b^1}^2\E\cW_2^2(\Lambda^\eta_{n\eta}(\mu),\Lambda^\eta_{n\eta}(\mu^M)).
\]
Since the coefficients of the SDE~\eqref{equ_modified_sde_with_interaction_for_sgd} are Lipschitz continuous, where the Lipschitz constant can be chosen independently of $\eta\in(0,T]$ due to the assumptions of the corollary and Lemma~\ref{lem_bdd_of_derivatives_implies_lipshitz_continuity}, we can apply Proposition~\ref{pro_continuous_dependence_of_solution} to bound $\E\cW_2^2(\Lambda^\eta_{n\eta}(\mu),\Lambda^\eta_{n\eta}(\mu^M))$. Thus, there exists a constant $C>0$ independent of $\eta$, $M$ and $n$ such that
\[
\E\cW_2^2(\Lambda^\eta_{n\eta}(\mu),\Lambda^\eta_{n\eta}(\mu^M))\leq C\E\cW_2^2(\mu,\mu^M)
\]
for all $\eta \in(0,T]$, $M\in \N$ and $n\in\N_0$ with $n\eta\leq T$. This completes the proof of the first part of the corollary. 

If $\mu$ has finite $p$th moment for $p>2$ such that $p\not=4$ for $d\leq 4$ and $p\not=\frac{d}{d-2}$ for $d\geq 5$, then, by Theorem~1 in~\cite{Fournier:2015},
\[
\E\cW_2^2(\mu,\mu^M)\leq C_1 \langle \phi_p,\mu\rangle^{\frac{2}{p}}K(M),
\]
where $\phi_p(x)=|x|^p$, $x\in\R^d$, and $C_1>0$ depends only on $p$ and $d$. Assuming that $\frac{K(M)}{\eta^4}\leq a$ for some $a>0$, we get
\begin{align*}
    \sup_{n:n\eta\leq T}\left|\E\Phi(\Lambda_{n\eta}^\eta)-\E\Phi(\Gamma_n^{M,\eta})\right|\leq C\eta^2+C\sqrt{\E\cW_2^2(\mu,\mu^M)}\leq C\eta^2+C\sqrt{aC_1}\langle\phi_p,\mu\rangle^{\frac{1}{p}}\eta^2.
\end{align*}
This completes the proof of the second part of the statement.
\end{proof}

\begin{remark}\label{rem_relationship_between_M_and_eta}
Assume that the measure $\mu\in\cP_2(\R^d)$ has all finite moments in Corollary~\ref{cor_overparam_case}. Then we can choose $p$ so large that the first term in every case of the definition of the constant $K(M)$ dominates. Therefore, the estimate~\eqref{equ_estimate_in_corol_for_overp} holds for all $\eta>0$ and $M\geq \frac{a}{\eta^q}$, where $q=8$ for $d\leq 3$, $q=2d$ for $d\geq 5$ and any $q>8$ for $d=4$, since $\frac{K(M)}{\eta^4}\leq a$ is satisfied for some $a>0$ and large enough $p$. 
\end{remark}

\subsection*{Acknowledgements}
The authors were supported by the Deutsche Forschungsgemeinschaft (DFG, German Research Foundation) – SFB 1283/2 2021 – 317210226.   BG acknowledges support by the Max Planck Society through the Research Group "Stochastic Analysis in the Sciences (SAiS)". The third author thanks the Max Planck Institute for Mathematics in the Sciences for its warm hospitality, where a part of this research was carried out.

\providecommand{\bysame}{\leavevmode\hbox to3em{\hrulefill}\thinspace}
\providecommand{\MR}{\relax\ifhmode\unskip\space\fi MR }
\providecommand{\MRhref}[2]{%
  \href{http://www.ams.org/mathscinet-getitem?mr=#1}{#2}
}
\providecommand{\href}[2]{#2}

\end{document}